\documentclass[a4paper,11pt,reqno]{amsart}

\usepackage[utf8]{inputenc}
\usepackage[T1]{fontenc}
\usepackage{lmodern}
\usepackage[english]{babel}
\usepackage{microtype}

\usepackage{amsmath,amssymb,amsfonts,amsthm,esint}
\usepackage{mathtools,accents}
\usepackage{mathrsfs}
\usepackage{aliascnt}
\usepackage{braket}
\usepackage{bm}

\usepackage[a4paper,margin=3.5cm]{geometry}
\usepackage[citecolor=blue,colorlinks]{hyperref}

\usepackage{enumerate}
\usepackage{xcolor}



\allowdisplaybreaks
\makeatletter
\g@addto@macro\@floatboxreset\centering
\makeatother


\makeatletter
\def\newaliasedtheorem#1[#2]#3{
	\newaliascnt{#1@alt}{#2}
	\newtheorem{#1}[#1@alt]{#3}
	\expandafter\newcommand\csname #1@altname\endcsname{#3}
}
\makeatother

\numberwithin{equation}{section}

\newtheoremstyle{ ed}{\topsep}{\topsep}{\slshape}{}{\bfseries}{.}{.5em}{}

\theoremstyle{plain}
\newtheorem{theorem}{Theorem}[section]
\newaliasedtheorem{proposition}[theorem]{Proposition}
\newaliasedtheorem{lemma}[theorem]{Lemma}
\newaliasedtheorem{corollary}[theorem]{Corollary}
\newaliasedtheorem{counterexample}[theorem]{Counterexample}
\newaliasedtheorem{fact}[theorem]{Fact}

\theoremstyle{definition}
\newaliasedtheorem{definition}[theorem]{Definition}
\newaliasedtheorem{question}[theorem]{Question}
\newaliasedtheorem{openquestion}[theorem]{Open Question}
\newaliasedtheorem{conjecture}[theorem]{Conjecture}

\theoremstyle{remark}
\newaliasedtheorem{remark}[theorem]{Remark}
\newaliasedtheorem{example}[theorem]{Example}


\newcommand{\setR}{\mathbb{R}}


\let\altphi\phi
\let\phi\varphi
\let\varphi\altphi
\let\altphi\undefined

\newcommand{\abs}[1]{\left\lvert#1\right\rvert}



\newcommand{\ip}[2]{\langle{#1},{#2}\rangle}






\newcommand{\meas}{\mathfrak{m}}


\DeclareMathOperator{\RCD}{RCD}

\newfont{\tmpf}{cmsy10 scaled 2500}




\def\XXint#1#2#3{{\setbox0=\hbox{$#1{#2#3}{\int}$ }
		\vcenter{\hbox{$#2#3$ }}\kern-.6\wd0}}

\begin{document}

	\title[Weyl's Lemma on $RCD(K,N)$ Spaces]{Weyl's Lemma on $RCD(K,N)$ Metric Measure Spaces}

	\author{Yu Peng}
	\address{Department of Mathematics, Sun Yat-Sen University, Guangzhou 510275, China,\ pengy86@mail2.sysu.edu.cn} 
	\author{Hui-Chun Zhang}
	\address{Department of Mathematics, Sun Yat-Sen University, Guangzhou 510275, China,\ zhanghc3@mail.sysu.edu.cn}
	\author{Xi-Ping Zhu}
	\address{Department of Mathematics, Sun Yat-Sen University, Guangzhou 510275, China,\ stszxp@mail.sysu.edu.cn}

	\maketitle
	
	\begin{abstract}
		In this paper, we extend the classical Weyl's lemma to   $RCD(K,N)$ metric measure spaces.    As  its applications,  we show  the local regularity of solutions for Poisson  equations  and a Liouville-type result  for $L^1$ very weak harmonic functions  on $RCD(K,N)$  spaces. Meanwhile, a byproduct is that we obtain a gradient estimate for solutions to a class of elliptic equations with dis-continuous coefficients.
	\end{abstract}
	
	\section{Introduction}
	The classical Weyl's lemma  states that any very weakly harmonic function in the Euclidean spaces $\setR^n$ must be smooth. Namely,
	\begin{theorem}[Weyl \cite{Weyl40}]
		Let $\Omega\subset\setR^n$ be open. Suppose that $u\in L_{\rm loc}^1(\Omega)$ and 
		\begin{equation*}
			\int_{\Omega}u\Delta \phi=0
		\end{equation*}
		for any $\phi\in C^{\infty}_c(\Omega)$. Then $u$ is  smooth (I.e. there exists a $C^\infty$-representative).
	\end{theorem}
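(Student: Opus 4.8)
The plan is to prove this by mollification, using the mean value property of classical harmonic functions as the one genuinely non-trivial ingredient. Fix a radially symmetric mollifier $\rho\in C^\infty_c(\setR^n)$ with $\rho\ge 0$, $\supp\rho\subset B_1(0)$ and $\int_{\setR^n}\rho=1$, and set $\rho_\eps(x):=\eps^{-n}\rho(x/\eps)$. For $\eps>0$ write $\Omega_\eps:=\{x\in\Omega:\Dist(x,\partial\Omega)>\eps\}$ and define the mollification $u_\eps:=u*\rho_\eps$ on $\Omega_\eps$. Since $u\in L^1_\loc(\Omega)$, differentiation under the integral sign gives $u_\eps\in C^\infty(\Omega_\eps)$, and standard mollifier theory gives $u_\eps\to u$ in $L^1_\loc(\Omega)$ as $\eps\downarrow 0$.

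The first step is to show that each $u_\eps$ is harmonic in the classical sense on $\Omega_\eps$. Given $\phi\in C^\infty_c(\Omega_\eps)$, the function $\rho_\eps*\phi$ belongs to $C^\infty_c(\Omega)$, since its support lies in the closed $\eps$-neighbourhood of $\supp\phi$, which is compactly contained in $\Omega$. Because $\rho$ is even, Fubini's theorem and the fact that convolution commutes with $\Delta$ give $\int_{\Omega_\eps}u_\eps\,\Delta\phi=\int_\Omega u\,(\rho_\eps*\Delta\phi)=\int_\Omega u\,\Delta(\rho_\eps*\phi)=0$ by hypothesis. Thus the \emph{smooth} function $u_\eps$ satisfies $\int u_\eps\,\Delta\phi=0$ for all $\phi\in C^\infty_c(\Omega_\eps)$, and integrating by parts — now legitimately — yields $\Delta u_\eps\equiv 0$ on $\Omega_\eps$.

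The second step is consistency in $\eps$. Because $u_\eps$ is harmonic and $\rho$ is radial, the spherical mean value property gives $u_\eps*\rho_\delta=u_\eps$ on $\Omega_{\eps+\delta}$ for every $\delta>0$. On the other hand, by commutativity and associativity of convolution, $u_\eps*\rho_\delta=u*\rho_\eps*\rho_\delta=u_\delta*\rho_\eps=u_\delta$ on $\Omega_{\eps+\delta}$, the last equality being the same mean value argument applied to the harmonic function $u_\delta$. Hence $u_\eps=u_\delta$ on $\Omega_{\eps+\delta}$ for all $\eps,\delta>0$, so the $u_\eps$ patch together to a single $\tilde u\in C^\infty(\Omega)$ with $\tilde u=u_\eps$ on $\Omega_{2\eps}$; in particular $\tilde u$ is harmonic, hence smooth. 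Finally, since $u_\eps\to u$ in $L^1_\loc(\Omega)$ while $u_\eps=\tilde u$ on every fixed compact subset of $\Omega$ for $\eps$ small, we conclude $u=\tilde u$ a.e., so $\tilde u$ is the desired $C^\infty$-representative.

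The main obstacle is exactly the two facts about classical harmonic functions used above: that a smooth function annihilating $\Delta$ distributionally is pointwise harmonic (an easy integration by parts), and — more essentially — that a harmonic function reproduces itself under convolution with a radial mollifier. It is this self-reproducing property, equivalent to the mean value property, that upgrades the mere $L^1_\loc$-convergence $u_\eps\to u$ to the exact identity $u_\eps=u_\delta$, and hence delivers a genuine smooth representative rather than just a smooth approximation; everything else is routine mollifier bookkeeping. On $RCD(K,N)$ spaces the analogue of this step — a quantitative self-improvement of the mollified/heat-regularized solution — is where the real work will lie, since there is no elementary mean value property available.
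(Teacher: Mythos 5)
Your argument is correct and complete. Each step checks out: $\rho_\eps*\phi\in C^\infty_c(\Omega)$ because $\supp\phi\subset\Omega_\eps$, the evenness of $\rho$ justifies the duality $\int u_\eps\,\Delta\phi=\int u\,\Delta(\rho_\eps*\phi)=0$, integration by parts for the smooth $u_\eps$ gives $\Delta u_\eps\equiv0$ pointwise, and the radial mean value property together with commutativity of convolution yields $u_\eps=u_\delta$ on $\Omega_{\eps+\delta}$ (note $\Omega_{2\eps}\cap\Omega_{2\delta}\subset\Omega_{\eps+\delta}$, so the patching is well defined), after which $L^1_{\rm loc}$-convergence identifies the smooth harmonic limit with $u$ almost everywhere. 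The paper does not reprove this Euclidean statement --- it is quoted from Weyl's original work --- so the instructive comparison is with the paper's proof of its $RCD$ analogue (Theorem \ref{weyl-lemma} via Lemma \ref{lem3.5}). There the mollifier is replaced by the heat semigroup $P_t$, and, exactly as you anticipate in your closing paragraph, the missing mean value identity is circumvented by a quantitative scheme rather than an exact one: instead of showing that the regularization is independent of the parameter, the authors prove a $t$-uniform $L^\infty$ bound on the measure-valued Laplacian of $v_t=P_t(u\chi)$ by testing the distributional identity against $\eta\cdot P_t\phi$ and invoking Gaussian bounds on $p_t$, $|\nabla p_t|$ and $|\Delta p_t|$ away from $\supp\phi$; the Lipschitz regularity theory for functions with bounded Laplacian (Lemma \ref{lem2.4}) then gives a $t$-uniform gradient bound, and one lets $t\to0$ in $L^1$. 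Your exact self-reproduction identity $u_\eps=u_\delta$ is stronger but genuinely Euclidean; the paper's a priori estimate is the part of the argument that survives in the nonsmooth setting, where moreover the conclusion degrades from $C^\infty$ to local Lipschitz continuity (and cannot be improved, cf.\ Remark \ref{remark1.4}).
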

	
	Extensions of Weyl's lemma are widely studied. For examples, Burch  \cite{Bur78} extended it to elliptic operators with  variable coefficients, H\"ormander \cite{H67} proved Weyl's lemma for hypoelliptic operators, and  Elson \cite{E74} developed a theory of distributions on separable real Banach spaces and proved an analogy of Weyl's lemma.   We refer to  \cite{Strook08} for a survey.
		 Recently, Di Fratta-Fiorenza \cite{DFF20} gave a short proof for the regularity of very weak solutions of Poisson equations via Weyl's lemma. 
	
	In this paper, we will extend Weyl's lemma to non-smooth settings. More precisely, we will  prove Weyl's lemma on $RCD(K,N)$  spaces. Given a metric measure space $(X,d,\meas)$, the  notion $RCD(K,N)$ is  a synthetic notion of lower Ricci bounds for  $(X,d,\meas)$, which  has been  developed in    \cite{Stu06a,Stu06b,LV09, AGS14a,AGS14b,Gig15,AMS16,EKS15}. The parameters $K\in\mathbb R$ and $N\in [1,+\infty]$ play the role of “Ricci curvature $\geqslant   K$ and dimension $\leqslant N$”.   We refer to the survey \cite{Amb18} for the improvements of geometric analysis on $RCD(K,N)$ spaces.
	Important examples of $RCD(K,N)$ spaces include Ricci limit spaces \cite{CC96, CC97, CC00} and finite-dimensional Alexandrov spaces with curvature bounded from below with respect to their Hausdorff measures \cite{Pet11, ZZ10}. 
	
	Let $(X,d,\meas)$ be an $RCD(K,N)$ space. The  Sobolev space $W^{1,2}(X)$ is a Hilbert space, and then  the inner product $\ip{\nabla u}{ \nabla v}\in L^1(X,\meas)$ makes sense for any $u,v\in W^{1,2}(X)$. Given an open subset $\Omega\subset X$, a function $u\in W^{1,2}_{\rm loc}(\Omega)$ is called a  (weakly) {\it harmonic function} on $\Omega$ if 
	\begin{equation*}
		\int_{X}\ip{\nabla u}{\nabla\phi }d\meas=0, \ \ \forall\phi\in Lip_c(\Omega),
	\end{equation*} 
	where $ Lip_c(\Omega)$ denotes the space of Lipschitz functions  defined on $X$ with compact supported in the interior of $\Omega$. The Lipschitz regularity of harmonic functions  on $RCD(K,N)$ spaces has been established in \cite{GigliMos15,Jia14,HKX13}.   
	
	In order to extend Weyl's lemma to the setting of $RCD(K,N)$ spaces, let us begin with the notion of ``very weakly harmonic function''.  Let $(X,d,\meas)$ be an $RCD(K,N)$ space. We denote by $P_t$  the heat flow  on $L^2(X,\meas)$ and  by $\Delta$  the associated infinitesimal generator with domain $D(\Delta)$.  
	Recall that the class of {\textit{test  functions}} on $RCD(K,N)$ spaces is given in \cite{GP20} by 
	\begin{equation*}
		\mathrm{Test}^\infty(X):=\left\{\phi\in D(\Delta)\cap L^\infty(X):\abs{\nabla \phi}\in L^\infty(X),\ \Delta \phi\in L^\infty(X)\cap W^{1,2}(X)\right\}.
	\end{equation*}	
	Given any open domain $\Omega\subset X$,  denote  by
	\begin{equation}\label{equ1.1}
	\mathrm{Test}_c^{\infty}(\Omega):=\left\{\phi\in  \mathrm{Test}^{\infty}(X) \big|  {\rm \ supp}(\phi)\subset \Omega\ {\rm and\ supp}(\phi)\ {\rm is\ compact}  \right\}.
	\end{equation}
It is well-known \cite{GP20} that the space $\mathrm{Test}^{\infty}(X)$ 	is dense in $W^{1,2}(X)$ and the space $\mathrm{Test}_c^{\infty}(\Omega)$ 	is dense in $W_0^{1,2}(\Omega).$	\begin{definition} 
		Let $(X,d,\meas)$ be an $RCD(K,N)$ space for some $1\leqslant N<\infty$ and $K\in \setR$ and let $\Omega\subset X$ be an open domain. A function  $u\in L_{\rm loc}^1(\Omega)$ is called a {\it very weakly subharmonic function} on $\Omega$ if 
	\begin{equation}\label{equ1.2}
	\int_{\Omega}u\Delta \phi d\meas\geqslant 0, \qquad \forall  \phi\in {\rm Test}^\infty_{c}(\Omega)\ \ {\rm and}\ \ \phi\geqslant 0.
	\end{equation}
	A function  $u\in L_{\rm loc}^1(\Omega)$ is called a {\it very weakly superharmonic function} on $\Omega$ if $-u$ is very weakly subharmonic. A function  $u\in L_{\rm loc}^1(\Omega)$ is called a {\it very weakly harmonic function} on $\Omega$ if both $u$ and $-u$ are very weakly subharmonic.
	\end{definition}
	
 The Weyl's lemma is concerned with the regularity of  a very weak harmonic function $u$ on $\Omega$. If  $u\in W^{1,2}_{\rm loc}(\Omega)$ then it is a (weakly) harmonic, and hence it is locally Lipschitz continuous (see  \cite{GigliMos15,Jia14,HKX13}).  This  remains the gap between the $u\in L^1_{\rm loc}(\Omega)$ and the  $u\in W^{1,2}_{\rm loc}(\Omega)$.
	
	The major obstacle of the improvement from $L^1_{\rm loc}(\Omega)$ to $W^{1,2}_{\rm loc}(\Omega)$ for very weak harmonic functions on a singular space  can be understood as follows.
For the convenience of the discussion, we consider a very weak harmonic function
  $u:(\Omega,g)\to \mathbb R$  from a domain $\Omega\!\subset\!\mathbb R^n$ with a \emph{singular} Riemannian metric $ g=(g_{ij})$. Then  $u\in L^1_{\rm loc}(\Omega)$ is a very weak solution of the  
elliptic equation of  divergence form
\begin{equation}\label{elliptic-equ}
\sum_{i,j=1}^n \partial_i\left(a^{ij} \partial_ju\right)=0\quad{\rm with}\ \   a^{ij}:=\sqrt gg^{ij},
  \end{equation}
in the sense of distributions, where $\partial_i=\frac{\partial }{\partial x_i}$,  $g=\det(g_{ij})$ and $(g^{ij})$ is the inverse matrix of $(g_{ij})$. It seems that Zhang-Bao's  work  \cite{ZB12} is the only  result fixing the whole gap from $L^1_{\rm loc}(\Omega)$ to $W^{1,2}_{\rm loc}(\Omega)$. They proved \cite{ZB12} that if the coefficients $ a^{ij}\in Lip_{\rm loc}(\Omega)$, then every $L^1_{\rm loc}(\Omega)$-solution of \eqref{elliptic-equ} must be in $W^{1,2}_{\rm loc}(\Omega)$.  Brezis \cite{Bre08} showed that if  all  $ a^{ij} $ are Dini continuous then every  $BV_{\rm loc}(\Omega)$-solution  of \eqref{elliptic-equ} must be in   $ W^{1,2}_{\rm loc}(\Omega)$. Recently, Manna-Leone-Schiattarella in \cite{MLS20} proved that if $ a^{ij}\in C^0(\Omega)\cap W^{1,n}_{\rm loc}(\Omega)$ satisfying a doubling Dini  condition, then every $   L^{\frac{n}{n-1}}_{\rm loc}(\Omega)$-solution of \eqref{elliptic-equ} is in  $  W^{1,2}_{\rm loc}(\Omega)$.  The
Dini-type condition of the coefficients $ a^{ij}$ is necessary:  Jin-Maz'ya-Schaftingen in \cite{JMS09}  constructed an equation \eqref{elliptic-equ} with $ a^{ij}\in C^0(\Omega)$ and that it has a solution $ u\in W^{1,1}_{\rm loc}(\Omega)\setminus W^{1,2}_{\rm loc}(\Omega)$. 
On the other hand,  we assume even that $\Omega$ is a small neighborhood near a regular point  in a finite dimensional Alexandrov space with curvature bounded from below.
 According to \cite{OS94, Per}, there is
 a  $BV_{\rm loc}$-Riemannian metric $(g_{ij})$ on $\Omega$.
However, it is well-known \cite{OS94} that these $g_{ij}$ may not be continuous on a \emph{dense} subset of $\Omega$. Thus, It seems the mentioned works \cite{JMS09, ZB12,Bre08} do not support that one can improve a $L^1_{\rm loc}(\Omega)$ very weak harmonic function to be in $W^{1,2}_{\rm loc}(\Omega)$, even on an Alexandrov space with curvature bounded from below.

	The  main result of this paper is the following Weyl's lemma on RCD-setting. 
	\begin{theorem}[Weyl's lemma]\label{weyl-lemma}
		Let $(X,d,\meas)$ be an $RCD(K,N)$ space for some $1\leqslant N<\infty$ and $K\in \setR$. Let $\Omega\subset X$ be an open domain. If  $u\in L_{\rm loc}^1(\Omega)$ is a very weakly harmonic function on $\Omega$,
		then $u$ is locally Lipschitz continuous on $\Omega$ (i.e. it has a locally Lipschitz continuous representative).	In particular, it is in $W^{1,2}_{\rm loc}(\Omega)$. 
	\end{theorem}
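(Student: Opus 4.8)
The plan is to mollify $u$ by the heat flow and let the time parameter tend to $0$, imitating the classical proof by convolution; the new difficulty compared with $\setR^n$ is that $P_t$ has no compact support, so $P_tu$ is not exactly harmonic near a given point, and the key will be to show that its Laplacian is nonetheless \emph{exponentially small} in $t$ there, thanks to the Gaussian-type heat kernel estimates on $RCD(K,N)$ spaces with $N<\infty$. First I would localize: fix $x_0\in\Omega$ and $r>0$ with $\overline{B_{4r}(x_0)}\subset\Omega$, choose a cut-off $\eta\in\mathrm{Test}^\infty(X)$ with $0\leqslant\eta\leqslant1$, $\eta\equiv1$ on $B_{3r}(x_0)$ and $\supp\eta\subset B_{4r}(x_0)$ (such functions exist on $RCD(K,N)$ spaces), and set $w:=\eta u\in L^1(X,\meas)$. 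Since $\eta\equiv1$ on a neighbourhood of $\supp\psi$ for every $\psi\in\mathrm{Test}_c^\infty(B_{3r}(x_0))$ and $\Delta$ is local, one has $\eta\,\Delta\psi=\Delta\psi$, hence $\int_X w\,\Delta\psi\,d\meas=\int_X u\,\Delta\psi\,d\meas=0$ for every nonnegative $\psi\in\mathrm{Test}_c^\infty(B_{3r}(x_0))$; as also $w=u$ on $B_{3r}(x_0)$, it suffices to produce a Lipschitz representative of $w$ on $B_r(x_0)$.

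Next I would set $w_t:=P_tw$ for $t>0$ and write $p_t$ for the heat kernel. Since $w\in L^1$, ultracontractivity of the heat semigroup gives $w_t\in L^1\cap L^\infty\subset L^2$, so $w_t\in D(\Delta)\subset W^{1,2}(X)$; moreover $w_t$ and $\Delta w_t$ are bounded and continuous, $w_t\to w$ in $L^1(X)$ as $t\to0^+$, and $\Delta w_t(x)=\int_X\Delta_y p_t(x,y)\,w(y)\,d\meas(y)$. Choosing $\zeta\in\mathrm{Test}_c^\infty(B_{3r}(x_0))$ with $0\leqslant\zeta\leqslant1$ and $\zeta\equiv1$ on $B_{5r/2}(x_0)$, for each $x$ the product $\zeta\,p_t(x,\cdot)$ is a nonnegative element of $\mathrm{Test}_c^\infty(B_{3r}(x_0))$ (using that $p_t(x,\cdot)\in\mathrm{Test}^\infty(X)$ and that $\mathrm{Test}^\infty(X)$ is an algebra), so for $x\in B_{2r}(x_0)$
\[
\Delta w_t(x)=\int_X w\,\Delta\bigl[\zeta\,p_t(x,\cdot)\bigr]d\meas+\int_X w\,\Delta\bigl[(1-\zeta)\,p_t(x,\cdot)\bigr]d\meas ,
\]
and the first integral vanishes by the localization step. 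In the second, expanding the Laplacian by the Leibniz rule leaves only terms supported in $\{y:d(x,y)\geqslant r/2\}$; the Gaussian upper bound for $p_t$, the gradient and time-derivative bounds for the heat kernel, and the local volume lower bound $\meas(B_{\sqrt t}(x))\geqslant c\,t^{N/2}$ (Bishop--Gromov) then give
\[
\bigl\|\Delta w_t\bigr\|_{L^\infty(B_{2r}(x_0))}\leqslant C\,t^{-1-N/2}\exp\bigl(-c\,r^2/t\bigr) ,
\]
with $C,c>0$ depending only on $K,N,r,\zeta$ and $\|w\|_{L^1(X)}$; in particular $\|\Delta w_t\|_{L^\infty(B_{2r}(x_0))}\to0$ as $t\to0^+$.

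Finally, for $t$ small enough one has $\|\Delta w_t\|_{L^\infty(B_{2r}(x_0))}\leqslant1$, so each $w_t\in W^{1,2}(B_{2r}(x_0))$ solves a Poisson equation with bounded right-hand side; invoking interior regularity for the Poisson equation on $RCD(K,N)$ spaces (\cite{Jia14}) in the form $\|v\|_{\Lip(B_r(x_0))}\leqslant C\bigl(\|v\|_{L^1(B_{2r}(x_0))}+\|\Delta v\|_{L^\infty(B_{2r}(x_0))}\bigr)$, together with $\|w_t\|_{L^1}\leqslant\|w\|_{L^1}$, yields a bound on $\|w_t\|_{\Lip(B_r(x_0))}$ uniform in $t\in(0,t_0)$. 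Thus $\{w_t\}$ is equi-Lipschitz and equi-bounded on $\overline{B_r(x_0)}$, so by the Arzel\`a--Ascoli theorem some sequence $w_{t_k}$ converges uniformly to $\bar w\in\Lip(\overline{B_r(x_0)})$; since $w_t\to w$ in $L^1$, necessarily $\bar w=w$ a.e.\ on $B_r(x_0)$, whence $u=w$ has a Lipschitz representative near $x_0$. As $x_0$ and $r$ were arbitrary, $u$ is locally Lipschitz on $\Omega$, and in particular $u\in W^{1,2}_{\rm loc}(\Omega)$.

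I expect the hard part to be the error estimate of the second paragraph: unlike a compactly supported mollifier, $P_t$ scatters the ``boundary defect'' $\Delta(\eta u)$ --- morally concentrated in the thin annulus $B_{4r}(x_0)\setminus B_{3r}(x_0)$ --- over all of $X$, and showing it becomes negligible on $B_{2r}(x_0)$ is exactly where the sharp Gaussian heat kernel bounds, and hence the assumption $N<\infty$, enter. A secondary subtlety is that the interior Poisson estimate must be used in the form quantitative in the $L^1$-norm of the solution (rather than the $L^2$-norm), since $\|w_t\|_{L^2}$ blows up as $t\to0^+$.
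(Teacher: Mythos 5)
Your proposal is correct and follows essentially the same strategy as the paper's: localize with a cut-off, mollify by the heat flow, show that the Laplacian of the mollification is bounded in $L^\infty$ on an interior ball because the cut-off's contribution is supported at definite distance from the points of interest and is controlled by the Gaussian heat-kernel bounds, then invoke the interior Lipschitz estimate for the Poisson equation (quantitative in the $L^1$-norm of the solution) and pass to the limit $t\to0^+$. The only real difference is organizational: the paper runs the key estimate in dual form, testing $\int\ip{\nabla P_tv}{\nabla\phi}\,d\meas$ and moving $P_t$ onto the test function $\phi$, which yields a $t$-uniform (but not vanishing) $L^\infty$ bound on $\Delta P_t v$, whereas you estimate $\Delta P_tw$ directly through the kernel $\Delta_y p_t(x,\cdot)$ and obtain a bound tending to zero; both versions rest on exactly the same ingredients, namely the estimates \eqref{HeatKE}--\eqref{HeatKELap}, cut-off functions with bounded Laplacian, and the interior gradient estimate of Lemma \ref{lem2.4}.
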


	\begin{remark}\label{remark1.4}
		When $X=\mathbb R^n$, the  regularity for very weak solutions of (\ref{elliptic-equ})  relies on the well-known Calder\'on-Zygmund theory which states  if $\Delta u\in L^p_{\rm loc}$ with $p>1$, then  $u\in W^{2,p}_{\rm loc}$. In particular, if $p>n$ then $|\nabla u|$ is in $C^\alpha_{\rm loc}$ for some $\alpha\in(0,1)$.
		Recently,  G. De Philippis and  J. N\'unez-Zimbr\'on \cite{DPNZ22} have shown  that this statement does not hold for general   Alexandrov spaces  with curvature bounded from below. For example, let us consider  the space $X^2$ constructed by  Ostu-Shioya in \cite{OS94}, which is a two-dimensional Alexandrov space  with the property that  the set  of singular points  is dense in $X^2$. 
Given any non-constant   harmonic function $u$ on a domain $\Omega\subset X^2$,  it was shown in \cite[Theorem 1]{DPNZ22} that  $\lim_{R\to0} \frac{1}{\meas(B_R(x))}\int_{B_R(x)}|\nabla u|^2(x)=0$ at every   point $x\in \Omega$ with ${\rm diam}(\Sigma_{x})<\pi$, where $\Sigma_x$ is the space of directions at $x$.  In the two-dimensional Alexandrov space $X^2$,   a point $x$ satisfying ${\rm diam}(\Sigma_x)<\pi$ is equivalent to that $x$ is  a singular point. Since the set of singular points on $X^2$ is dense, if $|\nabla u|$ were assumed to be continuous,  it would  follow $|\nabla u|\equiv0$ on $\Omega$. That is a contradiction.   
\end{remark}
					
		The first application of  Theorem \ref{weyl-lemma} is the following local regularity result for the  very weak solutions of Poisson equations on $RCD$ spaces. 
	\begin{corollary} \label{cor1.5}
		Let $(X,d,\meas)$ be an $RCD(K,N)$ space with $K\in\mathbb R$ and $N\in[1,\infty)$, and let $\Omega\subset X$ be an open subset. Suppose $f\in L^2_{\rm loc}(\Omega)$.  If   $u\in L_{\rm loc}^{1}(\Omega)$ is a very weak solution of Poisson equation on $\Omega$ in the sense of  
		\begin{equation}\label{equ1.5}
			\int_{\Omega} u\Delta\phi d\meas=\int_{\Omega}f\phi d\meas,\qquad \forall\phi\in \mathrm{Test}_c^{\infty}(\Omega)\ \ {\rm and}\ \ \phi\geqslant0.
		\end{equation}
		Then $u$ is in $W^{1,2}_{\rm loc}(\Omega)$. 
	\end{corollary}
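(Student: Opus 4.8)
\textbf{Proof plan for Corollary \ref{cor1.5}.} The strategy is to reduce the Poisson equation to the harmonic case handled by Theorem \ref{weyl-lemma}, by subtracting off a local Newtonian potential of $f$. Fix a point $x_0\in\Omega$ and a small ball $B=B_{2r}(x_0)$ with $\overline{B}\subset\Omega$. Since $f\in L^2_{\rm loc}(\Omega)$ we have $f\in L^2(B)$, and on the $RCD(K,N)$ space the Dirichlet problem $\Delta w = f$ in $B$ with $w\in W^{1,2}_0(B)$ has a unique solution $w\in W^{1,2}_0(B)\subset W^{1,2}_{\rm loc}(\Omega)$ (standard Lax–Milgram / direct-method argument using the Poincaré inequality on metric measure spaces). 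Extending $w$ by zero, $w\in L^1_{\rm loc}(\Omega)$, and by construction $\int_\Omega w\,\Delta\phi\,d\meas = \int_B \ip{\nabla w}{\nabla\phi}\,d\meas\cdot(-1) = \int_\Omega f\phi\,d\meas$ for every $\phi\in\mathrm{Test}_c^\infty(B)$ — here I use that test functions with compact support in $B$ lie in $W_0^{1,2}(B)$ and the self-adjointness of $\Delta$.

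Set $v := u - w$ on $B$. Then $v\in L^1_{\rm loc}(B)$, and for every nonnegative $\phi\in\mathrm{Test}_c^\infty(B)$ we get $\int_B v\,\Delta\phi\,d\meas = \int_B u\,\Delta\phi\,d\meas - \int_B w\,\Delta\phi\,d\meas = \int_B f\phi\,d\meas - \int_B f\phi\,d\meas = 0$. The same identity holds with $-\phi$ in place of $\phi$ by linearity (equivalently, testing against $-v$), so $v$ is a very weakly harmonic function on $B$ in the sense of the Definition. By Theorem \ref{weyl-lemma}, $v$ has a locally Lipschitz representative on $B$; in particular $v\in W^{1,2}_{\rm loc}(B)$. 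Therefore $u = v + w \in W^{1,2}_{\rm loc}(B)$. Since $x_0\in\Omega$ was arbitrary and such balls cover $\Omega$, we conclude $u\in W^{1,2}_{\rm loc}(\Omega)$.

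The only genuinely nonroutine point is verifying that the solution $w$ of the localized Dirichlet problem produces the correct distributional identity \eqref{equ1.5} against the restricted test class $\mathrm{Test}_c^\infty(B)$, and in particular that each such $\phi$ can be legitimately used as a competitor in the weak formulation $\int_B\ip{\nabla w}{\nabla\phi}\,d\meas = -\int_B f\phi\,d\meas$. This is exactly where the stated density of $\mathrm{Test}_c^\infty(\Omega)$ in $W_0^{1,2}(\Omega)$ (and of $\mathrm{Test}^\infty(X)$ in $W^{1,2}(X)$) from \cite{GP20} is used: $w\in D(\Delta)$ restricted to $B$ need not be a test function, but pairing $\ip{\nabla w}{\nabla\phi}$ with $\phi\in\mathrm{Test}_c^\infty(B)\subset W_0^{1,2}(B)$ and integrating by parts against $\Delta w = f$ is justified because $\phi$ lies in the form domain. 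A minor additional point is that the inequality-only formulation \eqref{equ1.5} (only nonnegative $\phi$) is upgraded to an equality-type identity for $v$ by decomposing a general test function as a difference of nonnegative ones or, more simply, by noting that both $\phi$ and $-\phi+c$ can be used after adjusting constants; this is the same mechanism already implicit in the Definition of very weakly harmonic functions, so no new difficulty arises.
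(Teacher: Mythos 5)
Your proposal is correct and follows essentially the same route as the paper: solve the Dirichlet problem $\mathbf{\Delta}w=f\cdot\meas$, $w\in W^{1,2}_0(B)$ on a ball $B\subset\subset\Omega$ (the paper cites \cite[Theorem 7.12]{Che99} for solvability where you invoke Lax--Milgram), check that $w$ satisfies the same distributional identity against $\mathrm{Test}_c^\infty(B)$ (the paper packages this as Lemma \ref{lem3.3}, you verify it directly via integration by parts and density), and then apply Theorem \ref{weyl-lemma} to $u-w$. No gaps; the remark about also testing with $-\phi$ is unnecessary since the definition of very weakly harmonic only requires the vanishing identity for nonnegative test functions, which you already establish.
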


	The second application of  Theorem \ref{weyl-lemma} is the following Liouville-type result  for $L^1$ very weakly subharmonic functions.	
	\begin{corollary}\label{cor1.6}
		Let $(X,d,\meas)$ be an $RCD(K,N)$ space  with $K\in\setR$ and $1\leq N<\infty$, then  any $L^1(X)$   very weakly subharmonic function on $X$ must be identically constant (namely, it has a constant representative).
	\end{corollary}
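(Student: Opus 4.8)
The plan is to use the \emph{global} integrability $u\in L^1(X)$ to upgrade the one-sided information ``$\Delta u\ge 0$'' coming from \eqref{equ1.2} to the identity $\Delta u=0$, after which Weyl's Lemma applies, leaving only a Liouville statement for genuinely harmonic $L^1$ functions. \emph{Step 1 ($\Delta u$ vanishes).} Condition \eqref{equ1.2} says precisely that $\phi\mapsto\int_X u\,\Delta\phi\,d\meas$ is a non-negative linear functional on the non-negative elements of $\mathrm{Test}_c^\infty(X)$; being a non-negative distribution, it is represented by a non-negative Radon measure $\mu$ on $X$, so that $\int_X u\,\Delta\phi\,d\meas=\int_X\phi\,d\mu$ for $\phi\in\mathrm{Test}_c^\infty(X)$. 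Fix $o\in X$ and, for $R\ge 1$, a good cutoff $\chi_R\in\mathrm{Test}_c^\infty(X)$ with $0\le\chi_R\le 1$, $\chi_R\equiv 1$ on $B_R(o)$, $\supp\chi_R\subset B_{2R}(o)$ and $\|\Delta\chi_R\|_{L^\infty(X)}\le C$ with $C=C(K,N)$ independent of $R$; such cutoffs exist on $RCD(K,N)$ spaces (combine the Mondino--Naber construction of good cutoff functions with the Laplacian comparison estimate, which bounds $\Delta\dist$ from above on the annuli $B_{2R}(o)\setminus B_R(o)$ by a constant depending only on $K,N$ once $R\ge 1$). Since $\Delta\chi_R$ is supported in $B_{2R}(o)\setminus B_R(o)$ and $\chi_R\ge\mathbf 1_{B_R(o)}$,
\begin{equation*}
\mu\bigl(B_R(o)\bigr)\le\int_X\chi_R\,d\mu=\int_X u\,\Delta\chi_R\,d\meas\le C\int_{B_{2R}(o)\setminus B_R(o)}|u|\,d\meas,
\end{equation*}
and the right-hand side tends to $0$ as $R\to\infty$ because $u\in L^1(X)$. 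As $B_R(o)\uparrow X$, this forces $\mu(X)=0$, i.e. $\int_X u\,\Delta\phi\,d\meas=0$ for every $\phi\in\mathrm{Test}_c^\infty(X)$; in particular $u$ is a very weakly harmonic function on $X$.

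\emph{Step 2 (regularity and sign).} By Theorem \ref{weyl-lemma}, $u$ has a locally Lipschitz representative; in particular $u$ is continuous, $u\in W^{1,2}_{\mathrm{loc}}(X)$, and $u$ is a (weakly) harmonic function. By the Kato/chain-rule inequality on $RCD(K,N)$ spaces, $|u|$ is then weakly subharmonic on $X$; it is also continuous, non-negative and $|u|\in L^1(X)$. \emph{Step 3 (heat-flow Liouville argument).} Since $|u|$ is weakly subharmonic, $t\mapsto P_t|u|$ is non-decreasing, so $P_t|u|\ge|u|$ for all $t>0$; on the other hand $RCD(K,N)$ spaces are stochastically complete, so the heat flow preserves mass on $L^1$, whence $\int_X P_t|u|\,d\meas=\int_X|u|\,d\meas$. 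Therefore $\int_X\bigl(P_t|u|-|u|\bigr)\,d\meas=0$ with non-negative integrand, and thus $P_t|u|=|u|$ $\meas$-a.e. for every $t>0$: the finite measure $|u|\,\meas$ is invariant under the heat semigroup. Because $X$ is connected and the heat kernel is everywhere positive, the heat semigroup is irreducible, hence its only $L^1$ invariant functions are the constants (equal to $0$ when $\meas(X)=\infty$ and to $\frac{1}{\meas(X)}\int_X|u|\,d\meas$ when $\meas(X)<\infty$); so $|u|$ is constant $\meas$-a.e. Finally, $u$ is continuous and $|u|$ is constant on the connected space $X$, so $u$ itself is identically constant.

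\emph{Main obstacle.} The crux is Step 1: converting the one-sided inequality \eqref{equ1.2} into the equation $\Delta u=0$. This is the only place the global hypothesis $u\in L^1(X)$ (rather than merely $u\in L^1_{\mathrm{loc}}$) is used, through the vanishing of the $L^1$-mass of $u$ over the annuli $B_{2R}(o)\setminus B_R(o)$; it also relies on the (standard but non-trivial) existence on $RCD(K,N)$ spaces of cutoff functions whose Laplacians are bounded uniformly in the scale. An alternative route that bypasses Theorem \ref{weyl-lemma} in Step 2 is to smooth directly with the heat flow: using the exponential spatial decay of $P_t\phi$, $|\nabla P_t\phi|$ and $\Delta P_t\phi=P_t\Delta\phi$ together with $u\in L^1(X)$, one shows that $P_tu$ is itself very weakly subharmonic, hence $\Delta P_tu\ge 0$; stochastic completeness then gives $\int_X\Delta P_tu\,d\meas=0$, so $\Delta P_tu\equiv 0$, $P_tu$ is harmonic, and one concludes as in Step 3.
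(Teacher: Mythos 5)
Your Steps 1 and 2 are sound, and Step 1 takes a genuinely different route from the paper's. The paper upgrades ``very weakly subharmonic'' to ``very weakly harmonic'' (Lemma \ref{lem5.1} and Proposition \ref{prop5.2}) by testing against the global test functions $p_t(x_0,\cdot)$, deducing that $t\mapsto P_tu(x)$ is nondecreasing, and then using stochastic completeness to force $P_tu=u$; you instead kill the defect directly with the scale-uniform cutoffs of Lemma \ref{cutoff} and the vanishing of $\int_{X\setminus B_R(o)}|u|\,d\meas$. Both arguments rest on the same two inputs (global $L^1$ integrability and uniformly controlled cutoffs), and your version can even bypass the Riesz representation: for a fixed nonnegative $\phi$ supported in $B_{R_0}(o)$, compare with $\|\phi\|_{L^\infty}\chi_R$ for $R\geqslant R_0$ and use positivity of the functional, which gives $0\leqslant\int_Xu\Delta\phi\,d\meas\leqslant C\|\phi\|_{L^\infty}\int_{B_{2R}\setminus B_R}|u|\,d\meas\to0$. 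After that, Theorem \ref{weyl-lemma} applies exactly as you say.

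The gap is the final deduction of Step 3. From $P_t|u|=|u|$ you conclude via ``irreducibility'' that the only nonnegative $L^1$ invariant functions are constants. Irreducibility does give this for invariant functions in $L^2$ (there $P_tv=v$ forces $v\in D(\Delta)$, $\Delta v=0$, hence $\int_X|\nabla v|^2\,d\meas=0$), but for a nonnegative invariant function that is merely in $L^1$ it is not a formal consequence of irreducibility: uniqueness of absolutely continuous invariant measures for irreducible Markov semigroups is a recurrence-type statement and fails for transient ones in general. The case $p=1$ is precisely the delicate point of this corollary --- it is why both \cite{PeterLi84} and the paper truncate --- so it cannot be quoted as a standard semigroup fact. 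The repair is short: set $v_n:=\min\{|u|,n\}$; then $P_tv_n\leqslant\min\{P_t|u|,\,nP_t1\}=v_n$ by order preservation and stochastic completeness, while symmetry of $p_t$ and \eqref{  equ2.6} give $\int_X(v_n-P_tv_n)\,d\meas=0$, so $P_tv_n=v_n$ with $v_n\in L^1\cap L^\infty\subset L^2$; the $L^2$ argument then shows each $v_n$ is constant, hence so is $|u|=\lim_nv_n$ (and it vanishes when $\meas(X)=\infty$). The paper instead truncates the harmonic function itself ($u_1=\min\{u,a\}$, $u_2=\max\{u_1,-a\}$), re-runs Proposition \ref{prop5.2} on each truncation, and applies Sturm's $L^2$-Liouville theorem \cite{Stu94} to $|u_2|$. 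A smaller point: your claim that subharmonicity of $|u|$ makes $t\mapsto P_t|u|$ nondecreasing itself requires passing from compactly supported to global test functions (the content of Lemma \ref{lem5.1}), though the cutoff machinery of your Step 1 already supplies this.
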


	Recall that Li-Schoen \cite{LS84} and Chung \cite{Chu83} gave some examples that a complete Riemannian manifold admits non-constant nonnegative $L^1$ harmonic functions.  Under an assumption of Ricci  lower bounds, P. Li \cite{PeterLi84} proved a   Liouville-type result for $L^1$ subharmonic functions   on complete Riemannian manifolds. We will prove  Corollary \ref{cor1.6} along the same line in \cite{PeterLi84} and combine Theorem \ref{weyl-lemma}.

At last, as a byproduct, we give a remark on the Lipschitz estimates for weak solutions  of a class of elliptic equations  on Euclidean spaces with {\it dis-continuous coefficients}. 
Let $n\geqslant 2$ and let $D\subset \mathbb R^n$ be a bounded domain. 
Suppose that $u\in W^{1,2}_{\rm loc}(D)$ is a weak solution of elliptic
equations of divergence form 
\begin{equation}\label{equation-1.5}
Lu:={\rm div}(A(x)Du)=\sum_{i,j=1}^n\partial_i( a^{ij}\partial_j u)=0,\quad 1\leqslant i, j\leqslant n,
\end{equation}
where the matrix of coefficients $A(x)=( a^{ij})_{i,j=1}^n$ is   symmetric, bounded, measurable and uniformly elliptic,  i.e.
\begin{equation}\label{equation-1.6}
\| a^{ij}\|_{L^\infty(D)}\leqslant \Lambda,\qquad   \lambda |\xi|^2\leqslant \sum_{i,j=1}^n a^{ij}(x)\xi_i\xi_j,\quad {\rm a.e.} \ x\in D, \quad \forall \xi\in \mathbb R^n
\end{equation}
for some $\lambda,\Lambda>0$.  It is well known \cite{Bur78}  that any weak solution $u$ of $Lu = 0$ is in $C^1(D)$  provided that $A$ satisfies the  Dini continuity condition. Recently, this Dini continuity condition were weaken to  others Dini-type conditions by Y. Li \cite{Li17}, Dong-Kim \cite{DK17} and Maz'ya-McOwen \cite{MMO11}.  However, an example constructed by Jin-Maz'ya-Schaftingen \cite{JMS09} shows that, general speaking,  the  continuity of the coefficient is not sufficient to ensure $u\in Lip_{\rm loc}(D)$.

Inspired by the regularity of harmonic functions on $RCD$ spaces, we first introduce   a class of models for elliptic coefficient, which plays the role of constant coefficient 
in the classical regularity theory.

\begin{definition}\label{cone-coefficient}
Let  $\overline{A}=(\overline{a}^{ij}): \mathbb R^n\to \mathbb R^n\times \mathbb R^n$ be   symmetric, bounded, measurable and uniformly elliptic on the each ball $B_R(0)\subset \mathbb R^n$ with $R>0$. Let $n\geqslant 3$. We first introduce a Riemannian metric  $\overline{g}_{ij}(x)$ on $\mathbb R^n$ by  
\begin{equation}\label{equation-1.7}
(\overline{g}_{ij})^{-1}(x)=\overline{g}^{ij}(x):=\frac{\overline{a}^{ij}(x)}{\left[{\rm det}(\overline{a}^{ij}(x))\right]^{\frac{1}{n-2}}}, \qquad  1\leqslant i,j\leqslant n.
\end{equation}
We denote by $d_{\overline{g}}$   the distance function induced by Riemannian metric $(\mathbb R^n, \overline{g}_{ij})$. 

The coefficient $\overline{A}$  is called a {\emph{conical coefficient of nonnegative curvature}}  if  the metric space $(\mathbb R^n,d_{\overline g})$ is a  cone with nonnegative curvature in the sense of Alexandrov. 
\end{definition}

We remark that if $\bar  a^{ij}$ is constant coefficient, then the Riemannian metric $\bar g_{ij}=const$, and then $(\mathbb R^n, \bar g_{ij})$ is flat (i.e., sectional curvature is zero). 

There are also some {\emph{ dis-continuous}} conical coefficient of nonnegative curvature. For example, consider the graph $\Gamma_f$ of a convex function $$f(x_1,x_2,\cdots, x_n)=\left(a_1x_1^2+a_2x^2_2+\cdots+a_nx^2_n\right)^{1/2},$$ where   $a_i\in (0,+\infty)$ for all $i=1,\cdots, n$. It is well known that $\Gamma_f$ is a cone of nonnegative curvature in the sense of Alexandrov. Under the coordinate system $(x_1,x_2,\cdots, x_n)$, its Riemannian metric is given by 
$$\bar g_{ij}=\delta_{ij}+\frac{a_ix_i}{f}\cdot \frac{a_jx_j}{f},$$
for all $1\leqslant i,j\leqslant n$. Then $\bar A(x)=(\bar a^{ij}(x))$ with 
$$\bar  a^{ij}:=\sqrt{{\rm det}(\bar g_{ij})} \bar g^{ij}=\left(1+\frac{\sum_{i=1}^n(a_ix_i)^2}{f^2} \right)^{1/2} \left(\delta_{ij}+\frac{a_ix_i}{f}\cdot \frac{a_jx_j}{f}\right)^{-1}$$
is a conical coefficient of nonnegative curvature, and $\bar  a^{ij}(x)$ is not continuous at $0.$

\begin{theorem}\label{theorem-1.8}
Let $u\in W^{1,2}_{\rm loc}(B_1(0))$ be a weak solution of elliptic equations of divergence form ${\rm div}(A(x)Du)=0,$
where the matrix of coefficients $A(x)=( a^{ij})_{i,j=1}^n$ is   symmetric, bounded, measurable and uniformly elliptic. Let $n\geqslant 3.$

 Suppose that   $A$ is Dini asymptotic to a conical coefficient of nonnegative
curvature in the sense that  there exists a conical coefficient of nonnegative curvature $\overline{A}=(\overline{a}^{ij})$ such that 
\begin{equation}\label{equ-dini}
\int_0^1\frac{\omega_{A,\overline{A}}(t)}{t}dt<\infty,\quad\   where \ \ \omega_{A,\overline{A}}(t):= \|A(x)-\overline{A}(x)\|_{L^\infty(B_t(0))}.
\end{equation}
Then 
   \begin{equation}\label{equation-1.9}
\limsup_{r\to0^+}   \fint_{B_{r}(0)}|\nabla u|^2d\mathcal L^n  \leqslant   C_0  \fint_{B_{1}(0)}|\nabla u|^2d\mathcal L^n,
\end{equation}
where the constant $C_0$ depends only on $n, \lambda, \Lambda$ and $ \omega_{A,\overline A}$, and $\mathcal L^n$ is the $n$-dimensional Lebesgue measure on $\mathbb R^n$.
\end{theorem}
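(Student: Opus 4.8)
The plan is to transfer the equation to the $RCD(0,n)$ metric measure cone canonically attached to the model coefficient $\overline A$, and there to run a Campanato-type perturbation scheme in which the Laplace--Beltrami operator of the cone plays the role of the constant-coefficient Laplacian in the classical theory. First I note that by \eqref{equation-1.7} one has $\overline a^{ij}=\sqrt{\det\overline g}\,\overline g^{ij}$, so the Euclidean operator $\div(\overline A D\,\cdot\,)$ coincides, up to a positive density, with the Laplace--Beltrami operator $\overline\Delta$ of $(\setR^n,\overline g)$. Writing $\overline\nabla$, $\meas_{\overline g}=\sqrt{\det\overline g}\,\mathcal L^n$ and $\overline B_r(\cdot)$ for the gradient, Riemannian volume and $d_{\overline g}$-metric balls, uniform ellipticity of $A$ (hence of $\overline A$) makes $\overline g$ two-sided comparable to the Euclidean metric with constants $c(n,\lambda,\Lambda)$, so that $|\overline\nabla\,\cdot\,|_{\overline g}^2$, $\meas_{\overline g}$ and the balls $\overline B_r(0)$ are comparable to their Euclidean analogues and the Dini integral of \eqref{equ-dini} passes to the $\overline g$-world. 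Thus $u$ solves $\div_{\overline g}(B\overline\nabla u)=0$ on $\overline B_{r_0}(0)$, where $r_0=r_0(n,\lambda,\Lambda)>0$ is chosen so that $\overline B_{r_0}(0)\subset B_1(0)$, $B$ is bounded and uniformly elliptic, and $\tilde\omega(r):=\norm{B-\Id}_{L^\infty(\overline B_r(0))}$ is again Dini with $\tilde\omega(r)\leqslant c(n,\lambda,\Lambda)\,\omega_{A,\overline A}(c'r)$. Finally, by the defining hypothesis $(\setR^n,d_{\overline g})$ is a nonnegatively curved Alexandrov cone, so $(\setR^n,d_{\overline g},\meas_{\overline g})$ is an $RCD(0,n)$ metric measure cone with vertex $0$; in polar coordinates $(\rho,\theta)$ about $0$ (with $\rho=d_{\overline g}(0,\cdot)$) the measure factorizes as $\meas_{\overline g}=\rho^{n-1}\,d\rho\,d\meas_\Sigma$ and $|\overline\nabla\rho|_{\overline g}=1$.

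The heart of the argument will be the following model estimate: if $v$ is $\overline g$-harmonic on $\overline B_r(0)$, then $\rho\mapsto\fint_{\overline B_\rho(0)}|\overline\nabla v|_{\overline g}^2\,d\meas_{\overline g}$ is non-decreasing on $(0,r]$. Indeed $v$ is locally Lipschitz by interior regularity of harmonic functions on $RCD$ spaces, and since $K=0$ the Bochner inequality on $RCD(0,n)$ forces $h:=|\overline\nabla v|_{\overline g}^2\in W^{1,2}_{\mathrm{loc}}$ to be subharmonic and locally bounded up to $0$. Testing $\overline\Delta h\geqslant0$ against radial Lipschitz functions $\zeta(\rho)$ with support in $(0,r)$, and using $|\overline\nabla\rho|=1$ together with the cone factorization of $\meas_{\overline g}$ (so that the $\Sigma$-component of $\overline\nabla(\zeta\circ\rho)$ drops out), one finds that $\rho\mapsto\rho^{n-1}G'(\rho)$ is non-decreasing, where $G(\rho):=\meas_\Sigma(\Sigma)^{-1}\int_\Sigma h(\rho,\theta)\,d\meas_\Sigma$; since $h$ is bounded near $0$ and $n\geqslant3$, the limit of $\rho^{n-1}G'(\rho)$ at $0$ is forced to be $\geqslant0$, hence $G$ is non-decreasing, and therefore so is $\fint_{\overline B_\rho(0)}h\,d\meas_{\overline g}=\tfrac{n}{\rho^n}\int_0^\rho s^{n-1}G(s)\,ds$. (Alternatively one may expand $v$ into homogeneous harmonic components on the cone and invoke the sharp Lichnerowicz estimate $\lambda_1(\Sigma)\geqslant n-1$, which forces every nonconstant homogeneity to be $\geqslant1$; cf.\ the cone results underlying Remark~\ref{remark1.4}.)

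For the iteration I would fix $r\leqslant r_0$, let $v=v_r$ be the $\overline g$-harmonic replacement of $u$ on $\overline B_r(0)$ (the minimizer of $\int_{\overline B_r(0)}|\overline\nabla\,\cdot\,|_{\overline g}^2$ in $u+W^{1,2}_0(\overline B_r(0))$), and set $w:=u-v$. Minimality gives $\int_{\overline B_r(0)}|\overline\nabla v|_{\overline g}^2\leqslant\int_{\overline B_r(0)}|\overline\nabla u|_{\overline g}^2$, while testing $\overline\Delta w=\div_{\overline g}((\Id-B)\overline\nabla u)$ with $w\in W^{1,2}_0(\overline B_r(0))$ gives $\int_{\overline B_r(0)}|\overline\nabla w|_{\overline g}^2\leqslant\tilde\omega(r)^2\int_{\overline B_r(0)}|\overline\nabla u|_{\overline g}^2$. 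For $\rho\leqslant r$, splitting $|\overline\nabla u|^2\leqslant(1+\eps)|\overline\nabla v|^2+(1+\eps^{-1})|\overline\nabla w|^2$, applying the model estimate to $v$ on $\overline B_\rho(0)\subset\overline B_r(0)$, and using cone self-similarity $\meas_{\overline g}(\overline B_r(0))/\meas_{\overline g}(\overline B_\rho(0))=(r/\rho)^n$, I obtain
\[
\fint_{\overline B_\rho(0)}|\overline\nabla u|_{\overline g}^2\,d\meas_{\overline g}\leqslant\Bigl[(1+\eps)+(1+\eps^{-1})(r/\rho)^n\tilde\omega(r)^2\Bigr]\fint_{\overline B_r(0)}|\overline\nabla u|_{\overline g}^2\,d\meas_{\overline g}.
\]
Taking $\rho=r/2$ and $\eps=\tilde\omega(r)$ (legitimate for $r$ small) bounds the bracket by $1+C(n)\tilde\omega(r)$, so $a_{k+1}\leqslant(1+C\tilde\omega(2^{-k}))a_k$ with $a_k:=\fint_{\overline B_{2^{-k}}(0)}|\overline\nabla u|_{\overline g}^2\,d\meas_{\overline g}$; since $\tilde\omega$ is non-decreasing and Dini, $\sum_k\tilde\omega(2^{-k})<\infty$, hence $a_k\leqslant\exp\bigl(C\sum_j\tilde\omega(2^{-j})\bigr)a_{k_0}$ for all $k\geqslant k_0$. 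Filling in the dyadic annuli and translating back to Euclidean balls via the comparabilities above then yields \eqref{equation-1.9} with $C_0=C_0(n,\lambda,\Lambda,\omega_{A,\overline A})$.

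I expect the whole difficulty to sit in the model estimate of the second paragraph: beyond it, the scheme is the classical Dini-coefficient iteration, and the only reason it closes is that the model estimate holds with constant \emph{exactly} $1$, so that each dyadic step costs a factor $1+C\tilde\omega$ that is summable under \eqref{equ-dini}. Proving that constant requires the synthetic Bochner inequality (to get subharmonicity of $|\overline\nabla v|_{\overline g}^2$ on $RCD(0,n)$) together with the monotonicity of the $\meas_{\overline g}$-average of a nonnegative, locally bounded subharmonic function over vertex-centered balls of a metric measure cone --- and this is where nonnegativity of the curvature, the cone splitting of $\meas_{\overline g}$, and $n\geqslant3$ all enter, and where the slicing identities and weak manipulations must be justified carefully in the $RCD$ setting.
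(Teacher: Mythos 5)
Your proposal is correct and follows the same skeleton as the paper's proof: transfer the problem to the $RCD(0,n)$ cone $(\setR^n,d_{\overline g},{\rm vol}_{\overline g})$ attached to $\overline A$, establish a sharp (constant exactly $1$) monotonicity of $\rho\mapsto\fint_{B^{\overline g}_\rho(0)}|\nabla_{\overline g}v|^2d{\rm vol}_{\overline g}$ for cone-harmonic $v$, and then run the harmonic-replacement/Dini iteration on geometric balls, which is carried out in the paper exactly as you describe (energy estimate $\|\nabla_{\overline g}v_\ell\|_{L^2}\leqslant C\,\omega_{A,\overline A}(\rho^{\ell-1})\|\nabla_{\overline g}u\|_{L^2}$, cone self-similarity ${\rm vol}_{\overline g}(B^{\overline g}_r)/{\rm vol}_{\overline g}(B^{\overline g}_\rho)=(r/\rho)^n$, multiplicative factors $1+C\omega$ summed via $\ln(1+s)\leqslant s$ and the Dini condition). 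The one genuine divergence is how the model monotonicity (the paper's Lemma \ref{lem6.1}) is proved. Your primary route — Bochner with $K=0$ to get $|\nabla_{\overline g}v|^2$ subharmonic, then monotonicity of vertex-ball averages of a bounded subharmonic function via radial test functions and the polar factorization of ${\rm vol}_{\overline g}$ — is workable, but it carries a real technical burden in the synthetic setting: one must justify the coarea/slicing identity for $d{\rm vol}_{\overline g}=\rho^{n-1}d\rho\, d\meas_\Sigma$, the distributional manipulation giving that $\rho^{n-1}G'(\rho)$ is non-decreasing, and the boundary analysis at the vertex, none of which is off-the-shelf on an $RCD$ cone. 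The paper avoids all of this by invoking the separation-of-variables theorem of Huang for harmonic functions on $RCD(0,N)$ cones, $v=\sum_i c_ir^{\alpha_i}\phi_i(\xi)$ with $\lambda_i=\alpha_i(N+\alpha_i-2)$, together with the spectral gap $\lambda_1(\Sigma)\geqslant N-1$ on the $RCD(N-2,N-1)$ cross-section (Ketterer, Jiang--Zhang), which forces $\alpha_i\geqslant1$ and yields $\frac{1}{r^N}\int_{B_r}|\nabla_{\overline g}v|^2=\sum_i c_i^2\alpha_i r^{2\alpha_i-2}$, manifestly non-decreasing — this is precisely the ``alternative'' you mention in passing, and it is the cleaner and fully rigorous path; if you pursue the Bochner route you would need to supply the slicing lemmas you correctly flag as the delicate point.
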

This extends the result in \cite{Bur78}.

	\textbf{Organization of the paper.}  In Section \ref{SecPre}, we will recall some ingredients  on $RCD(K,N)$ spaces. In Section \ref{sect-weyl-lem}, we will prove  Weyl's lemma (Theorem \ref{weyl-lemma}).   Corollary \ref{cor1.5} and \ref{cor1.6} will be proved in section \ref{sect4} and \ref{sect5}.  Finally, we discuss the gradient estimates of solutions to elliptic equations of divergence form, Theorem \ref{theorem-1.8}, in section \ref{sec6}.
	
	\textbf{Acknowledgment.} The second author was partially supported by NSFC 12025109.
	The third author was partially supported by NSFC 12271530.

	\section{Preliminaries}\label{SecPre}

	Let $(X,d,\meas)$  be a metric measure space, i.e. $(X,d)$ is a complete and separable metric space endowed with a non-negative Borel metric which is finite on bounded sets. 
	Throughout this paper, we always   assume that $(X,d,\meas)$ is an $RCD(K,N)$ space with $N\in[1,+\infty)$ and $K\in\mathbb R$.  For explicit definitions, one can refer to  \cite{Gig15,AMS16,EKS15} and the survey \cite{Amb18}.
	
	Let $(X,d,\meas)$ be an $RCD(K,N)$ space with $K\in\mathbb R$ and $N\in[1,+\infty)$. The generalized Bishop-Gromov inequality (see \cite{LV09,Stu06a}) implies  a local measure doubling: for any $R>0$, there exists a constant $C_{N,K,R}>0$ such that 
	\begin{equation}\label{equ2.1}
		\frac{\meas(B_{r_2}(x))}{\meas(B_{r_1}(x))}\leq C_{N,K,R} \cdot \left(\frac{r_2}{r_1}\right)^N,\quad \forall r_1,r_2\in (0,R)\ \ {\rm with} \ \ r_1<r_2.
	\end{equation}

	The Sobolev spaces on $(X,d,\meas)$ were established in \cite{Che99,Shan00,AGS14a,Gig15} via several different approaches. We refer the readers to the textbook \cite[Chapter 2]{GP20} for basic definitions and properties of $W^{1,2}(X):=W^{1,2}(X,d,\meas)$ and $W^{1,2}_{\rm loc}(X):=W^{1,2}_{\rm loc}(X,d,\meas)$. For any $f\in W^{1,2}(X),$ we denote by $|\nabla f|\in L^2(X)$ the minimal weak upper gradient of $f$ (see for example \cite[Sect. 2.1]{GP20}). For any $f,h\in W^{1,2}(X)$, the inner product 
	$\ip{\nabla f}{\nabla h}$ is given by
	\begin{equation*} 
		\ip{\nabla f}{\nabla h}= \frac{1}{4}\left(|\nabla(f+h)|^2-|\nabla (f-h)|^2\right) \ \in L^1(X).
	\end{equation*}
The inner product $\ip{\nabla f}{\nabla h}$ is symmetric and bi-linear, and satisfies Cauchy-Schwarz inequality, Chain rule and  Leibniz rule (see \cite[Appendix]{Stu94} or  \cite{GP20}).
	
  For any open domain $\Omega\subset X$, the space $W^{1,2}_{0}(\Omega)$ is the closure of $ Lip_c(\Omega)$ with support in $\Omega$ under $W^{1,2}(X)$-norm.
 The space $W^{1,2}_{\rm loc}(\Omega)$ is the space of Borel functions $f:\Omega\to \mathbb R$ such that $f\chi\in W^{1,2}_{\rm loc}(X)$ for any Lipschitz function  $\chi:X\to [0,1]$ such that $d\big( {\rm supp}(\chi),X\setminus\Omega\big)>0$, where a function $f\chi$ is taken $0$ by definition on $X\setminus \Omega.$	
 
	Let $\{P_t\}_{t>0}$ be the heat flow on $X$ associated with the energy $\int_X|\nabla f|^2d\meas$ for any $f\in W^{1,2}(X)$, and let $\Delta $ be the infinitesimal generator of $P_t$ with domain 
	$D(\Delta)$. It is well know that (see for instance \cite[Chapter 5]{GP20})
	\begin{equation}\label{equ2.2}
	\int_Xh\Delta fd\meas=-\int_X\ip{\nabla f}{\nabla h}d\meas,\quad \ \forall f\in D(\Delta), \ \ h\in W^{1,2}(X).
	\end{equation}
		For any $f\in L^2(X)$, it is well known that $P_tf\in D(\Delta)$, and that the curve $t\mapsto P_t f$ is in $C^1\left((0,+\infty), L^{2}(X)\right)$ with 
	$\frac{\mathrm{d}}{\mathrm{d}t}P_t f= \Delta P_t f.$
	If $f\in D(\Delta)$, then 
	\begin{equation*}
		P_t(\Delta f)=\Delta(P_tf),\qquad \forall t>0.
	\end{equation*}
	Since $P_t$ is a family of linear contractions on $L^2(X)$ for any $t>0$, it can be uniquely extended  to be a family of linear contractions on $L^p(X)$ for any $p\in[1,+\infty]$.

	Since $RCD(K,N)$ spaces are locally doubling and    satisfy a local Poincar\'e inequality (see \cite{VR08, Ra12}), according to \cite{Stu94,Stu96,AGS14b}, there exists a  heat kernel  
	$p_t(x,y)$ on $(0,+\infty)\times X\times X$ such that 
	\begin{equation}
		P_t f(x)=\int_{X}p_t(x,y)f(y)d\meas(y)\ \ \ \ \forall\ t>0
		\nonumber
	\end{equation}
	for every $f\in L^2(X)$. It was proved \cite{Stu96,AGS14b} that $(x,y)\mapsto p_t(x,y)$ is locally H\"older continuous on $X\times X$ for each $t>0$, and that $t\mapsto p_t(x,y)$ is analytic in $(0,+\infty)$  for each $x,y\in X$.
	Moreover,  it was showed in \cite{JLZ16} that there exist constants $C_1, C_2>0$ depending only on $N$ and $K$ such that
	\begin{equation}\label{HeatKE}
		\begin{aligned}
			\frac{1}{C_1\meas\left(B_{\sqrt{t}}(x)\right)}\exp&\left\{-\frac{d^2(x,y)}{3t}-C_2t\right\}\leq p_t(x,y) \\
			&\leq\frac{C_1}{\meas\left(B_{\sqrt{t}}(x)\right)}\exp\left\{-\frac{d^2(x,y)}{5t}+C_2t\right\}
		\end{aligned}
	\end{equation}
	for all $t>0$ and all $x,y\in X$,  and 
	\begin{equation}\label{HeatKEGrad}
		\abs{\nabla p_t(\cdot,y)}(x)\leq\frac{C_1}{\sqrt{t}\cdot\meas\left(B_{\sqrt{t}}(x)\right)}\exp\left\{-\frac{d^2(x,y)}{5t}+C_2t\right\}
	\end{equation}
	for all $t>0$ and $\meas$-a.e. $x,y\in X$.
	Combining Davies \cite[Theorem 4]{D97} and (\ref{HeatKE}), the estimate 
	\begin{equation}\label{HeatKELap}
		\abs{\frac{\partial}{\partial t }p_t(x,y)}=	\abs{\Delta p_t(\cdot,y)}(x)\leq\frac{C_1}{t\cdot\meas\left(B_{\sqrt{t}}(x)\right)}\exp\left\{-\frac{d^2(x,y)}{5t}+C_2t\right\}
	\end{equation}
	holds for all $t>0$ and $\meas$-a.e. $x,y\in X$. An important property of $p_t(x,y)$ is the stochastic  completeness (see \cite{Stu94}):
	\begin{equation}\label{  equ2.6}
		\int_X p_t(x,y)d\meas(y)=1,\quad \forall t>0\ {\rm and}\ x\in X.
	\end{equation}
	
	The class of {\textit{test  functions}} on $RCD(K,N)$ spaces was introduced in \cite{GP20}:
	\begin{equation*}
		\mathrm{Test}^\infty(X):=\left\{f\in D(\Delta)\cap L^\infty(X):\abs{\nabla f}\in L^\infty(X),\ \Delta f\in L^\infty(X)\cap W^{1,2}(X)\right\},
	\end{equation*}
	which is an algebra and is dense in $W^{1,2}(X)$ (see \cite[Sect. 6.1.3]{GP20}).  Given any  $\phi\in \mathrm{Test}^\infty(X)$, it is well-known \cite{AMS16} that it has a (Lipschitz) continuous representative. Hence we always assume that every function in  $  \mathrm{Test}^\infty(X)$ is continuous. The following lemma is a standard fact about the heat kernel. 
	
	\begin{lemma}\label{lem2.1} {\rm (1)} For any $t>0$ and $x_0\in X$, the function $p_t(x_0,\cdot)\in {\rm Test}^\infty(X)$.\\
		{\rm (2)}  For any $\phi\in{\rm Test}^\infty(X)$ and $t>0$, it holds $P_t\phi\in {\rm Test}^\infty(X).$
	\end{lemma}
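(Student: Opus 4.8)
\textbf{Proof proposal for Lemma \ref{lem2.1}.}

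The plan is to verify the two defining conditions of $\mathrm{Test}^\infty(X)$ in turn, exploiting the smoothing properties of the heat semigroup together with the Gaussian-type bounds \eqref{HeatKE}--\eqref{HeatKELap}.

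For part (1), fix $t>0$ and $x_0\in X$ and set $f:=p_t(x_0,\cdot)$. The boundedness $f\in L^\infty(X)$ follows directly from the upper Gaussian estimate \eqref{HeatKE} together with the local doubling property \eqref{equ2.1} (which controls the growth of $1/\meas(B_{\sqrt t}(x))$ relative to $1/\meas(B_{\sqrt t}(x_0))$), and likewise $|\nabla f|\in L^\infty(X)$ follows from \eqref{HeatKEGrad} and $\Delta f\in L^\infty(X)$ from \eqref{HeatKELap}. The key structural point is the semigroup identity $p_t(x_0,\cdot)=P_{t/2}\big(p_{t/2}(x_0,\cdot)\big)$: since $p_{t/2}(x_0,\cdot)\in L^2(X)\cap L^\infty(X)$ (by \eqref{HeatKE} and the integrability $\int_X p_{t/2}(x_0,y)\,d\meas(y)=1$ from \eqref{  equ2.6}, interpolated), applying $P_{t/2}$ gives $f\in D(\Delta)$ with $\Delta f=P_{t/2}(\Delta p_{t/2}(x_0,\cdot))$, and analyticity of $t\mapsto p_t(x_0,\cdot)$ in $L^2$ shows $\Delta f\in L^2(X)$; that $\Delta f\in W^{1,2}(X)$ follows because $\Delta f=\Delta p_t(x_0,\cdot)=\tfrac{\partial}{\partial s}p_s(x_0,\cdot)\big|_{s=t}$ is again (up to a further application of $P_\varepsilon$) a heat-kernel-type object with a weak upper gradient controlled as in \eqref{HeatKEGrad}. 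Combining, $f\in D(\Delta)\cap L^\infty$, $|\nabla f|\in L^\infty$, and $\Delta f\in L^\infty\cap W^{1,2}$, which is exactly membership in $\mathrm{Test}^\infty(X)$.

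For part (2), let $\phi\in\mathrm{Test}^\infty(X)$ and $t>0$, and set $g:=P_t\phi$. Since $\phi\in D(\Delta)$ we have $g\in D(\Delta)$ with $\Delta g=P_t(\Delta\phi)=\Delta(P_t\phi)$, using the commutation $P_t\Delta=\Delta P_t$. Now $\Delta\phi\in L^\infty(X)\cap W^{1,2}(X)$ by hypothesis; since $P_t$ is a contraction on every $L^p$, $1\le p\le\infty$, we get $\Delta g=P_t(\Delta\phi)\in L^\infty(X)$, and since $P_t$ maps $W^{1,2}(X)$ to $W^{1,2}(X)$ (it is the gradient flow of the Dirichlet energy, hence energy-decreasing), $\Delta g\in W^{1,2}(X)$ as well. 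Similarly $g=P_t\phi\in L^\infty(X)$ by $L^\infty$-contractivity, and the Bakry--\'Emery / Bochner estimate on $RCD(K,N)$ spaces, $|\nabla P_t\phi|^2\le e^{-2Kt}P_t(|\nabla\phi|^2)$, together with $|\nabla\phi|\in L^\infty(X)$ and $L^\infty$-contractivity of $P_t$, gives $|\nabla g|\in L^\infty(X)$. Thus $g\in\mathrm{Test}^\infty(X)$.

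I expect the only mildly delicate point to be the assertion $\Delta\big(p_t(x_0,\cdot)\big)\in W^{1,2}(X)$ in part (1): one must produce an $L^2$ weak upper gradient bound for $\Delta p_t(x_0,\cdot)$, which is most cleanly done by writing $\Delta p_t(x_0,\cdot)=P_{t/2}\big(\Delta p_{t/2}(x_0,\cdot)\big)$ with $\Delta p_{t/2}(x_0,\cdot)\in L^2(X)$ (integrability from \eqref{HeatKELap} plus \eqref{equ2.1}) and invoking that $P_{t/2}$ maps $L^2(X)$ into $W^{1,2}(X)$ with the quantitative bound $\int_X|\nabla P_{t/2}h|^2\,d\meas\le \tfrac{1}{t}\|h\|_{L^2}^2$ (a standard consequence of the spectral calculus for the Dirichlet Laplacian). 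Everything else is a routine combination of the Gaussian estimates \eqref{HeatKE}--\eqref{HeatKELap}, the doubling property \eqref{equ2.1}, $L^p$-contractivity of $P_t$, and the commutation $\Delta P_t=P_t\Delta$.
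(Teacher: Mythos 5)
Your proposal is correct and follows essentially the same route as the paper: part (1) via the semigroup identity $p_t(x_0,\cdot)=P_{t/2}\big(p_{t/2}(x_0,\cdot)\big)$ with $p_{t/2}(x_0,\cdot)\in L^1\cap L^\infty$ and the Gaussian bounds \eqref{HeatKE}--\eqref{HeatKELap}, and part (2) via $L^\infty$-contractivity, the commutation $\Delta P_t=P_t\Delta$, the smoothing $P_t:L^2\to W^{1,2}$, and the Bakry--Ledoux gradient estimate. The only cosmetic difference is that the paper deduces $\Delta(P_t\phi)\in W^{1,2}$ from $\Delta\phi\in L^2$ alone (using $L^2\to W^{1,2}$ smoothing), whereas you also invoke energy-decrease on $W^{1,2}$; both are valid.
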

	\begin{proof}
		The assertion (1) follows from $p_{t/2}(x_0,\cdot)\in L^\infty(X)\cap L^1(X)$, 
		$$p_t(x_0,x)=\big[P_{t/2}\big(p_{t/2}(x_0,\cdot)\big)\big](x)$$
		and   the estimates (\ref{HeatKE})-(\ref{HeatKELap}).
		
		For (2), since $\phi\in D(\Delta)\cap L^\infty(X)$ and $\Delta\phi\in L^\infty(X)$,  by the contraction of $P_t$ on $L^\infty(X)$,  we have $P_t\phi\in L^\infty(X)$ and  $\Delta (P_t\phi)=P_t(\Delta\phi)   \in L^\infty(X)$.   From $\Delta\phi\in  L^2(X)$, we obtain $\Delta (P_t\phi)=P_t(\Delta\phi)\in W^{1,2}(X).$
		
		Finally,  since $|\nabla \phi|\in L^\infty(X)$, by using the Bakry-Ledoux inequality on $RCD(K,N)$ spaces (see \cite[Theorem 4.3]{EKS15}) and the contraction of $P_t$ on $L^\infty(X)$ again, we conclude that
		$$|\nabla P_t\phi|^2\leqslant e^{-2Kt} P_t(|\nabla \phi|^2)\in L^\infty(X).$$ 
		The proof of (2) is finished.
	\end{proof}

	The existence of good cut-off functions is an important  ingredient in geometric analysis.  For an $n$-dimensional Riemannian manifold $M^n$ with Ricci curvature   bounded from below,  one can estimate merely the upper bounds for the Laplacian of distance functions, by the Laplace comparison theorem. In this aspect,  Schoen-Yau in \cite[Theorem 4.2 in Chapter I]{SY94} constructed distance-like functions on $M^n$ having $L^\infty$-bounds of the Laplacian.  Cheeger-Colding \cite{CC96} found   good cut-off functions with $L^\infty$ estimate on the  Laplacian, which is a key technical tool in the Cheeger-Colding theory of
Ricci limit spaces (see \cite{CC96,CC97,CC00}). 
	 	The existence of a regular cut-off function  in $RCD$ setting was proved in \cite{AMS16, HKX13, MN19}. 
	  We need the following variant  of   \cite[Lemma 3.1]{MN19}.
	\begin{lemma}\label{cutoff}
		Let $(X,d,\meas)$ be an $RCD(K,N)$ space with $K\in\setR$ and $1\leq N<\infty$. 
		For each $r_0>0$, there exists a constant $C_{K,N,r_0}>0$ such that  for any ball $B_R(x)\subset X$ with radius  $R\geqslant r_0$, there exists a cut-off function $\eta: X\to \left[0,1\right]$ such that
		
		$\mathrm{(i)}$ $\eta\equiv1$ on $B_{R}(x)$ and ${\rm supp}(\eta)\subset  B_{2R}(x)$,
		
		$\mathrm{(ii)}$ $\eta\in \mathrm{Test}^\infty(X)$, and moreover  for $\meas$-a.e. $x\in X$,
		\begin{equation*} 
			|\Delta\eta|(x)+|\nabla\eta|(x)\leqslant \frac{C_{N,K,r_0}}{R}.
		\end{equation*}
	{\rm (}Remark that the constant $C_{N,K,r_0}$ does not depend on  $R$.{\rm )}
	\end{lemma}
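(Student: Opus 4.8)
The plan is to take $\eta$ to be (a short regularisation of) the Mondino--Naber good cut-off function of \cite[Lemma 3.1]{MN19}, built on the pair of concentric balls $B_R(x)\subset B_{2R}(x)$, and then to trace how the constants in its estimates depend on the radius $R$; the point is that the hypothesis $R\geqslant r_0$, combined with the Laplacian comparison theorem on $RCD(K,N)$ spaces, makes this dependence disappear. Since an $RCD(K,N)$ space is also an $RCD(K',N)$ space for every $K'\leqslant K$, I may and will assume $K\leqslant 0$ throughout, the borderline case $K=0$ being harmless.

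Next I would recall that the construction of a regular cut-off on $B_R(x)\subset B_{2R}(x)$ --- due to Cheeger--Colding \cite{CC96} in the smooth case and carried out on $RCD$ spaces in \cite{AMS16,MN19,HKX13} --- is driven by the Laplacian comparison for $\mathsf{d}_{x}:=d(x,\cdot)$, namely, in the sense of distributions on $X\setminus\{x\}$,
\[
\Delta\mathsf{d}_{x}\ \leqslant\ \sqrt{(N-1)|K|}\,\coth\!\Bigl(\sqrt{\tfrac{|K|}{N-1}}\,\mathsf{d}_{x}\Bigr)
\qquad\Bigl(\text{and }\ \Delta\mathsf{d}_{x}\leqslant\tfrac{N-1}{\mathsf{d}_{x}}\ \text{ when }K=0\Bigr),
\]
together with a radial profile that interpolates from $1$ to $0$ across the annulus $\{R\leqslant\mathsf{d}_{x}\leqslant 2R\}$. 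The profile contributes factors of order $1/R$ to $|\nabla\eta|$ and of order $1/R^{2}$ to $|\Delta\eta|$, while the curvature enters $|\Delta\eta|$ only multiplicatively through the size of the comparison function on that annulus. The decisive observation is that, since $R\geqslant r_0$, this size is bounded by a constant $L=L(N,K,r_0)$ \emph{independent of $R$}: the map $s\mapsto\sqrt{(N-1)|K|}\coth(\sqrt{|K|/(N-1)}\,s)$ is decreasing and converges to $\sqrt{(N-1)|K|}$ as $s\to\infty$, hence is $\leqslant\sqrt{(N-1)|K|}\coth(\sqrt{|K|/(N-1)}\,r_0)$ on $\{\mathsf{d}_{x}\geqslant r_0\}$ (and $\leqslant(N-1)/r_0$ when $K=0$). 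Propagating these bounds through the construction gives, $\meas$-a.e.\ on $X$,
\[
|\nabla\eta|\ \leqslant\ \frac{C(N)}{R},\qquad\quad
|\Delta\eta|\ \leqslant\ \frac{C(N)}{R^{2}}+\frac{C(N)\,L}{R}\ \leqslant\ \frac{C(N)}{r_0\,R}+\frac{C(N)\,L}{R},
\]
where the last inequality uses $R\geqslant r_0$ once more to absorb $R^{-2}\leqslant(r_0R)^{-1}$. With $C_{N,K,r_0}:=C(N)\bigl(1+r_0^{-1}+L(N,K,r_0)\bigr)$ this is precisely property (i) and the estimate in (ii), and the constant does not depend on $R$.

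It then remains to ensure $\eta\in\mathrm{Test}^\infty(X)$ rather than merely $\eta\in D(\Delta)$ with $\Delta\eta\in L^\infty$. Since $\supp\eta\subset B_{2R}(x)$ is bounded and $\Delta\eta\in L^\infty$ one already has $\Delta\eta\in L^2$, so the only missing ingredient is $\nabla\Delta\eta\in L^2$; this is arranged by a standard short heat-flow regularisation (produce the cut-off equal to $1$ on a slightly larger ball $B_{3R/2}(x)$ and supported in $B_{7R/4}(x)$ with the above bounds, replace it by $\Phi\circ P_\tau(\cdot)$ with $\tau$ comparable to $r_0^{2}$ and $\Phi\in C^\infty([0,1];[0,1])$ equal to $1$ near $1$ and to $0$ near $0$), arguing as in the proof of Lemma \ref{lem2.1} and using the Gaussian heat-kernel estimates \eqref{HeatKE}--\eqref{HeatKELap}; here $R\geqslant r_0$ is used again to keep all the resulting constants $R$-free. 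Alternatively one may simply invoke the $\mathrm{Test}^\infty$-valued cut-offs of \cite{AMS16}, to which the bookkeeping above applies verbatim.

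The step I expect to be the genuine obstacle is the one summarised in the third and fourth sentences of the second paragraph: verifying rigorously that on a general $RCD(K,N)$ space --- where $\mathsf{d}_{x}$ is only a function of locally bounded variation and $\Delta\mathsf{d}_{x}$ a signed measure with a possibly singular part --- the cut-off construction indeed produces a bound for $|\Delta\eta|$ of the benign shape $C(N)/R^{2}+C(N)L(N,K,r_0)/R$. This is exactly where the saturation of the $\coth$ comparison for $K\leqslant0$ must be exploited: a cruder estimate of the form $C(N,K)/R^{2}$, or of the form $C(N)\bigl(R^{-2}+|K|\bigr)$, would fail to decay like $1/R$ as $R\to\infty$ and so would not deliver the $R$-independent constant asserted in the lemma.
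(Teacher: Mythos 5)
Your route is genuinely different from the paper's, and it has a gap at exactly the point you flag yourself, so the flag is not a formality: the argument as written does not go through. The mechanism you describe --- compose a radial profile $h$ with $d_x:=d(x,\cdot)$ and control $\boldsymbol{\Delta}(h\circ d_x)=h''(d_x)|\nabla d_x|^2\meas+h'(d_x)\,\boldsymbol{\Delta}d_x$ via the Laplacian comparison theorem --- only ever yields a \emph{one-sided} bound. On an $RCD(K,N)$ space $\boldsymbol{\Delta}d_x$ is a measure bounded \emph{above} by the $\coth$ comparison function times $\meas$, but with no lower bound; its singular negative part (cut locus, conical points) is multiplied by $h'\leqslant 0$ and becomes an unbounded \emph{positive} contribution to $\boldsymbol{\Delta}(h\circ d_x)$. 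Hence no choice of profile gives $|\Delta\eta|\in L^\infty$ by direct composition, and the saturation of $\coth$ on $\{d_x\geqslant r_0\}$ --- which is correct as far as it goes --- controls only the absolutely continuous, upper-comparison part. Closing this would require the full Cheeger--Colding scheme (solve a Poisson problem on the annulus and trap the solution between radial barriers, which is substantial extra work in the $RCD$ setting), and your fallback of invoking the $\mathrm{Test}^\infty$ cut-offs of \cite{AMS16} ``to which the bookkeeping above applies verbatim'' does not help either: those cut-offs are built by heat-flow mollification, not by Laplacian comparison, so your bookkeeping does not transfer to them.

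The paper's proof (a quantitative rereading of \cite[Lemma 3.1]{MN19}) avoids the distance function entirely: it takes the Lipschitz cut-off $\phi$ with $|\nabla\phi|\leqslant 1/R$, sets $\phi_t=P_t\phi$, and reads off $|\nabla\phi_t|+|\Delta\phi_t|\leqslant C(K,N,t)/R$ directly from the Bakry--Ledoux inequality \eqref{equ2.7}, which bounds $|\Delta\phi_t|^2$ by a multiple of $P_t(|\nabla\phi|^2)\leqslant e^{-2Kt}R^{-2}$. The hypothesis $R\geqslant r_0$ enters not through any comparison function but through the uniform choice of the regularization time: $\|\phi_t-\phi\|_{L^\infty}\leqslant F_{K,N}(t)/R\leqslant F_{K,N}(t)/r_0$, so a single $t_{K,N,r_0}$ keeps $\phi_t$ within $1/4$ of the values $0$ and $1$ on the relevant sets, and composing with a fixed $C^2$ function produces the cut-off; membership in $\mathrm{Test}^\infty(X)$ is then automatic from Lemma \ref{lem2.1} and the chain rule. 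If you wish to salvage your approach you must import the Cheeger--Colding Poisson-equation step; otherwise the heat-flow argument is both shorter and the one the references you cite actually use.
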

	\begin{proof} This is  a slight modification  of \cite[Lemma 3.1]{MN19}.   For the convenience of readers, we give the proof  as follows. Fix a ball $B_{2R}(x)\subset X$ with radius $R\geqslant r_0$ and let $ \phi$ be a Lipschitz continuous function defined as $ \phi\equiv1$ on $B_{R}(x)$, $ \phi\equiv0$ on $X\setminus B_{2R}(x)$ and $ \phi(y)=\frac{2R-d(x,y)}{R}$ on $B_{2R}(x)\setminus B_{R}(x)$. As in \cite[Lemma 3.1]{MN19},  we consider the heat flow regularization $\phi_t:=P_t\phi$. According to \cite{EKS15},  we can choose
continuous representatives of $\phi_t, |\nabla \phi_t|$ and $|\Delta \phi_t|$; and moreover, by the Bakry-Ledoux's inequality (see  \cite[Theorem 4.3]{EKS15}), we have  for everywhere 
		\begin{equation}\label{equ2.7}
	|\nabla \phi_t|^2+\frac{4Kt^2}{N(e^{2Kt}-1)}|\Delta \phi_t|^2\leqslant e^{-2Kt}P_t(|\nabla \phi|^2)\leqslant \frac{e^{-2Kt}}{R^2},
		\end{equation}
	where we have used $|\nabla \phi|\leqslant 1/R$ and  $L^\infty$-contraction of $P_t$. 
 It follows that
$$|\phi_t(y)-\phi(y)|\leqslant \frac{1}{R}\int_0^t \sqrt{\frac{N(e^{2Ks}-1)e^{-2Ks}}{4Ks^2}}ds:=\frac{F_{K,N}(t) }{R}\leqslant  \frac{F_{K,N}(t) }{r_0},\quad \forall y\in X,$$
by $R\geqslant r_0$, where $F_{K,N}(\cdot):\mathbb R^+\to \mathbb R^+$ is continuous and $\lim_{t\to0^+}F_{K,N}(t)=0. $ 
It is clear that there exists $t_{K,N,r_0}>0$ such that $F_{K,N}(t_{K,N,r_0})\leqslant r_0/4$. Therefore  we have $\phi_{t_{K,N,r_0}}(y)\in [3/4,1] $ for every $y\in B_{R}(x)$ and $\phi_{t_{K,N,r_0}}(y)\in [0,1/4] $ for every $y\in X\setminus  B_{2R}(x)$. As in \cite[Lemma 3.1]{MN19}, the desired cut-off
function $\eta$ can be given by composition with a $C^2$-function $f:  \mathbb R \to [0, 1]$ such that $ f\equiv1$ on  
$[3/4, 1]$ and $f\equiv0$   on $[1/4, 0]$; indeed, $\eta:=f\circ\phi_{t_{K,N,r_0}}$  is now identically equal to one
on $B_{R-r_0}(x)$, vanishes identically on $X \setminus B_R(x)$  and, by \eqref{equ2.7} and the Chain rule (see for instance \cite[Proposition 5.2.3]{GP20}), it satisfies
the estimate $|\nabla \eta| + |\Delta\eta | \leqslant C(K,N,r_0)/R$ as desired.
 \end{proof}

	We recall the following measure-valued Laplacian  introduced in \cite{Gig15}.
	\begin{definition}\label{measured-laplace}
		Let $\Omega\subset X$ be an open subset and let  $f\in W_{\rm loc}^{1,2}(\Omega)$. It is called $f\in D(\boldsymbol{\Delta})$ on $\Omega$ if there exists a signed Radon measure $\mu$ on $\Omega$ such that 
		\begin{equation*}
			\int_\Omega \phi d\mu=-\int_\Omega\ip{\nabla \phi}{\nabla f} d\mathfrak{m},\quad \forall \phi\in  Lip_c(\Omega).		
		\end{equation*} 
  Such $\mu$ is unique and is denoted by $\boldsymbol{\Delta} f$ (which depends on $\Omega$).
	\end{definition}
Both the Chain rule and the Leibniz rule for this ${\bf \Delta}$ were proved in \cite[Chapter 4]{Gig15}.	
	When $\Omega=X$, it was showed \cite[Proposition 4.24]{Gig15} that for any $f\in W^{1,2}(X)$,
	$$f\in D(\Delta) \quad \Longleftrightarrow\quad f\in D({\bf\Delta})\ {\rm and}\ {\bf \Delta}f=g\cdot\meas\ {\rm for \ some }\ g\in L^2(X).$$ In this case ${\bf \Delta }f=\Delta f\cdot \meas$.

	The combination of \cite[Theorem 1.5]{ZZ16} and \cite[Theorem 3.1]{JKY14} states the following.
	\begin{lemma}\label{lem2.4}
		Let $\Omega\subset X$ be a domain and $f\in W^{1,2}_{\rm loc}(\Omega)$.  Suppose that 
		$\mathbf{\Delta} f=g\cdot \meas$ on $\Omega$ for some $g\in L^\infty_{\rm loc}(\Omega)$. Then $f\in  Lip_{\rm loc}(\Omega)$, and moreover for any ball $B_R(x)\subset \subset\Omega$, it holds
		$$\sup_{B_{R/2}(x)}|\nabla f| \leqslant C_{N,K,R}\big(\|f\|_{L^1(B_R(x))}+\|g\|_{L^\infty(B_R(x))}\big)$$
		for some constant $C_{N,K,R}.$ In particular, any   harmonic function $f$ (namely, ${\bf \Delta}f=0$ on $\Omega$) is locally Lipschitz continuous.
	\end{lemma}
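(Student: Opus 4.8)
Every $RCD(K,N)$ space is locally uniformly doubling (see \eqref{equ2.1}) and supports a local $(1,2)$-Poincar\'e inequality, so the De Giorgi--Nash--Moser machinery for divergence-form operators is available. The plan is to combine two interior estimates: a local $L^\infty$-bound, which absorbs the passage from the $L^1$-datum to a pointwise bound, and the local (Li--Yau / Cheng--Yau type) gradient estimate for solutions of Poisson-type equations with bounded right-hand side. Fix a ball $B_R(x)\subset\subset\Omega$; all estimates take place inside it, and we freely pass to the concentric balls $B_{3R/4}(x)$ and $B_{R/2}(x)$.

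\textbf{Step 1 (interior boundedness).} A function $f\in W^{1,2}_{\rm loc}$ with $\boldsymbol{\Delta}f=g\cdot\meas$ on $B_R(x)$ and $g\in L^\infty(B_R(x))$ is simultaneously a weak sub- and supersolution of a divergence-form equation with bounded source; Moser's iteration with the source term (the content of \cite[Theorem 3.1]{JKY14}, and in any case a standard fact on PI spaces) gives
\begin{equation*}
\|f\|_{L^\infty(B_{3R/4}(x))}\leqslant C_{N,K,R}\Big(\|f\|_{L^1(B_R(x))}+\|g\|_{L^\infty(B_R(x))}\Big),
\end{equation*}
the constant depending on $N,K,R$ through the local doubling and Poincar\'e constants of $B_R(x)$.

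\textbf{Step 2 (interior gradient estimate).} On $B_{3R/4}(x)$ we now have $\boldsymbol{\Delta}f=g\cdot\meas$ with \emph{both} $f$ and $g$ bounded. The local Li--Yau estimate of \cite[Theorem 1.5]{ZZ16}, in its elliptic form, yields $f\in Lip_{\rm loc}$ together with
\begin{equation*}
\sup_{B_{R/2}(x)}|\nabla f|\leqslant C_{N,K,R}\Big(\|f\|_{L^\infty(B_{3R/4}(x))}+\|g\|_{L^\infty(B_{3R/4}(x))}\Big),
\end{equation*}
and combining with Step 1 gives the asserted bound. The harmonic case $g\equiv0$ is contained in Step 2 alone, since then the right-hand side of Step 1 reads $C_{N,K,R}\|f\|_{L^1(B_R(x))}$; in fact for $g\equiv0$ the self-improved Bochner inequality gives $\boldsymbol{\Delta}|\nabla f|^2\geqslant 2K|\nabla f|^2\cdot\meas$, so $|\nabla f|^2$ is a nonnegative subsolution whose $L^1$-average is controlled, after Caccioppoli, by $\|f\|_{L^1(B_R(x))}/R^2$, and a further Moser iteration bounds $\sup_{B_{R/2}}|\nabla f|^2$ by this average.

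\textbf{Where the real work lies.} Step 1 is routine elliptic theory on PI spaces. The delicate point is Step 2 in the presence of a source $g$ that is \emph{only} bounded: Bochner's inequality produces the term $\langle\nabla f,\nabla\boldsymbol{\Delta}f\rangle=\langle\nabla f,\nabla g\rangle$, which cannot be controlled when $g\notin W^{1,2}$. The resolution (carried out in \cite{ZZ16}) is to avoid differentiating $g$: either split $f=w+v$ on $B_{3R/4}(x)$ with $v\in W^{1,2}_0$ solving $\boldsymbol{\Delta}v=g\cdot\meas$ and $w$ harmonic, estimating $\|v\|_\infty\lesssim\|g\|_\infty$ by the maximum principle and $\sup|\nabla v|\lesssim\|g\|_\infty$ through the Green-function representation together with the heat-kernel gradient bound \eqref{HeatKEGrad} integrated in $t$, and handling $w$ by the harmonic case above; or run a localized Cheng--Yau maximum-principle argument for $|\nabla f|^2$ in which the extra terms generated by $g$ are absorbed using $\|g\|_{L^\infty}$ and the good cut-off of Lemma~\ref{cutoff}. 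Either way, this is the step that consumes the curvature hypothesis (via the Bakry--Ledoux/Bochner inequality), and it is the main obstacle; everything else is a standard combination of De Giorgi--Nash--Moser estimates.
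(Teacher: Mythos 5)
Your proposal is correct and follows essentially the same route as the paper, which does not give an independent proof but obtains the lemma precisely as the combination of \cite[Theorem 3.1]{JKY14} (Moser-type passage from the $L^1$-datum plus $\|g\|_{L^\infty}$ to the Lipschitz bound) with the local gradient/Li--Yau estimates of \cite[Theorem 1.5]{ZZ16}, i.e.\ the same two ingredients you invoke in Steps 1 and 2. Your additional sketch of how the bounded-source gradient estimate is actually proved (decomposition plus Bochner, or a localized Cheng--Yau argument) is consistent with how those cited works proceed, so nothing further is needed.
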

	
At last, we recall a simple fact as follows.
\begin{lemma}\label{lem2.5}
Let $\Omega\subset X$ be  open. If $f\in D({\bf \Delta})\cap C(\Omega)$ then  ${\bf \Delta}f\big(\Omega\setminus {\rm supp}(f)\big)=0$. In particular, if $\Omega=X$ and if $f\in D(\Delta)\cap C(X)$ then $\Delta f(x)=0$ for $\meas$-a.e. $x\not\in {\rm supp}(f)$.  	
\end{lemma}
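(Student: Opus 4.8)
The plan is to reduce everything to the locality of the minimal weak upper gradient together with the defining property of the measure‑valued Laplacian. Set $U:=\Omega\setminus{\rm supp}(f)$; this set is open, and since $f$ is continuous the set $\{f\neq 0\}$ is open with closure ${\rm supp}(f)$, so $f\equiv 0$ on $U$. The goal will be to show that the restriction of the signed Radon measure $\mathbf{\Delta}f$ to $U$ is the zero measure, which immediately yields $\mathbf{\Delta}f\big(\Omega\setminus{\rm supp}(f)\big)=0$.

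First I would invoke the locality of $|\nabla\cdot|$: since $f\in W^{1,2}_{\rm loc}(\Omega)$ coincides with the constant function $0$ on the open set $U$, we have $|\nabla f|=0$ $\meas$-a.e.\ on $U$, and hence, by the Cauchy--Schwarz inequality for the bilinear form $\ip{\cdot}{\cdot}$, $\ip{\nabla\phi}{\nabla f}=0$ $\meas$-a.e.\ on $U$ for every $\phi\in W^{1,2}_{\rm loc}(\Omega)$. Next, I would take an arbitrary $\phi\in Lip_c(U)$; because ${\rm supp}(\phi)$ is a compact subset of $U\subset\Omega$, such a $\phi$ is an admissible test function in Definition~\ref{measured-laplace}, so that
\[
\int_\Omega \phi\, d(\mathbf{\Delta}f)=-\int_\Omega \ip{\nabla\phi}{\nabla f}\, d\meas=-\int_U \ip{\nabla\phi}{\nabla f}\, d\meas=0,
\]
using that $\nabla\phi=0$ $\meas$-a.e.\ on $\Omega\setminus U$ in the middle step and the previous paragraph in the last. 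Since a signed Radon measure on $\Omega$ is determined by its pairing against compactly supported Lipschitz functions — Lipschitz functions with compact support in $U$ being uniformly dense in $C_c(U)$ on a metric space — this forces $\mathbf{\Delta}f$ restricted to $U$ to vanish, and in particular $\mathbf{\Delta}f(\Omega\setminus{\rm supp}(f))=0$.

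For the ``in particular'' clause, when $\Omega=X$ and $f\in D(\Delta)\cap C(X)$ we recalled after Definition~\ref{measured-laplace} that $\mathbf{\Delta}f=\Delta f\cdot\meas$; combined with the vanishing of $\mathbf{\Delta}f$ on $U=X\setminus{\rm supp}(f)$ this gives $\int_A\Delta f\,d\meas=0$ for every Borel set $A\subset U$, whence $\Delta f=0$ $\meas$-a.e.\ on $X\setminus{\rm supp}(f)$. I do not expect a genuine obstacle here: the only two points requiring a word of care are the locality statement $|\nabla f|=0$ on the open set where $f$ is identically zero, and the measure‑theoretic passage from ``the pairing against every $\phi\in Lip_c(U)$ vanishes'' to ``the measure restricted to $U$ vanishes,'' both of which are standard facts about Sobolev functions and Radon measures on metric measure spaces.
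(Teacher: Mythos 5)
Your proof is correct and follows essentially the same route as the paper: locality of the minimal weak upper gradient gives $|\nabla f|=0$ a.e.\ off ${\rm supp}(f)$, Cauchy--Schwarz kills $\ip{\nabla f}{\nabla\phi}$, the definition of $\mathbf{\Delta}f$ tested against $\phi\in Lip_c(\Omega\setminus{\rm supp}(f))$ yields the first claim, and Gigli's identification $\mathbf{\Delta}f=\Delta f\cdot\meas$ gives the second. Your only addition is making explicit the density step from vanishing of the pairing on $Lip_c(U)$ to vanishing of the restricted measure, which the paper leaves implicit.
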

\begin{proof} Given any $\phi\in Lip_c(\Omega)$ with ${\rm supp}(\phi)\subset \Omega\setminus{\rm supp}(f)$,  by applying \cite[Corollary 2.25]{Che99}, we have that 
$|\nabla \phi|=0$ $\meas$-a.e. in $\Omega\setminus{\rm supp}(\phi)\supset {\rm supp}(f)$ and that $|\nabla f|=0$ $\meas$-a.e. in $\Omega\setminus{\rm supp}(f)$. It follows that $\ip{\nabla f}{\nabla \phi}=0$ $\meas$-a.e. in $\Omega$. Indeed, Cauchy-Schwarz inequality in \cite[Appendix]{Stu94} implies 
\begin{equation}\label{ equ2.8}
|\ip{\nabla f}{\nabla \phi}|\leqslant|\nabla f|\cdot|\nabla \phi| \quad  \meas{\rm-a.e.\ in}\ \Omega.
\end{equation}
Therefore, by the definition of ${\bf \Delta}f$, we have
${\bf \Delta}f(\phi)=0,$ which shows the first assertion.

If $\Omega=X$ and $f\in D(\Delta)$, by \cite[Proposition 4.24]{Gig15}, we have ${\bf \Delta}f=\Delta f\cdot\meas.$ Applying the first assertion, we have $\Delta f(x)=0$ for $\meas$-a.e. $x\in X\setminus{\rm supp}(f).$ The proof is finished.
\end{proof}

	\section{Distributions and Weyl's lemma} \label{sect-weyl-lem}
	Let $(X,d,\meas)$ be an $RCD(K,N)$ space for some $K\in \mathbb R$ and $N\in[1,+\infty)$. Recall that the class of {\textit{test  functions}}   
	\begin{equation*}
		\mathrm{Test}^\infty(X):=\left\{\phi\in D(\Delta)\cap L^\infty(X):\abs{\nabla \phi}\in L^\infty(X),\ \Delta \phi\in L^\infty(X)\cap W^{1,2}(X)\right\},
	\end{equation*}
  is dense in $W^{1,2}(X)$ and is an algebra (see \cite[Sect 6.1]{GP20}).
	We equip  $\mathrm{Test}^\infty(X)$  with the norm
	\begin{equation*}
		\|\phi\|_E:= \|\phi\|_{L^\infty(X)}+\||\nabla\phi|\|_{L^\infty(X)}+\|\Delta \phi\|_{L^\infty(X)},
	\end{equation*}
	and denote by $E(X)$ the linear normed space $\left(\mathrm{Test}^\infty(X), \|\cdot\|_E\right)$.

	Let $\Omega\subset X$ be an open domain, we set
	\begin{equation*}
	\begin{split}
	\mathrm{Test}_c^{\infty}(\Omega)&:=\left\{\phi \in  \mathrm{Test}^{\infty}(X) \big|\  {\rm    supp}(\phi)\subset \Omega \ {\rm and\ supp}(\phi) \ {\rm is\ compact}\right\}\\
		{\rm and}\quad \ \mathrm{Test}_{c,+}^{\infty}(\Omega)&:= \{\phi \in  \mathrm{Test}_c^{\infty}(\Omega)|\ \phi\geqslant 0\}.
		\end{split}
		\end{equation*}
		 	
		 From Lemma \ref{cutoff}, for each open domain $\Omega\subset X$ and every compact set $Q\subset \Omega$, there exists   $\eta\in \mathrm{Test}_c^{\infty}(\Omega)$ such that $\eta\equiv 1$ on $Q$ and $0\leqslant \eta\leqslant 1$.
  \begin{lemma}    \label{lem-add1}  
  The space $\mathrm{Test}_{c,+}^{\infty}(\Omega)$ is dense in $\{f\in Lip_c(\Omega)| f\geqslant 0\}$. Consequently, 
the space $\mathrm{Test}_c^{\infty}(\Omega)$ is dense in $W^{1,2}_0(\Omega)$. 
 \end{lemma}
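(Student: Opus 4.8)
The plan is to prove the first assertion—density of $\mathrm{Test}_{c,+}^\infty(\Omega)$ in the nonnegative Lipschitz functions with compact support in $\Omega$, with respect to the $W^{1,2}$-norm—and then deduce the second assertion by a simple linearity/lattice argument.

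\textbf{Step 1: Heat flow regularization of a nonnegative $f\in Lip_c(\Omega)$.} Fix $f\in Lip_c(\Omega)$ with $f\geqslant 0$, and let $Q:=\mathrm{supp}(f)\subset\Omega$, a compact set. Since $P_tf\to f$ in $W^{1,2}(X)$ as $t\to 0^+$ and $P_t$ preserves nonnegativity (it is a Markov semigroup), the functions $P_tf$ are nonnegative and converge to $f$ in $W^{1,2}$. Moreover, by Lemma~\ref{lem2.1}, $P_tf\in\mathrm{Test}^\infty(X)$ for every $t>0$ (indeed $f\in L^\infty\cap L^2$, so the argument of Lemma~\ref{lem2.1}(2), together with the standard regularizing estimates on $\Delta P_t$ and $|\nabla P_t|$ via the heat kernel bounds \eqref{HeatKE}--\eqref{HeatKELap} and the Bakry--Ledoux inequality, shows $P_tf\in\mathrm{Test}^\infty(X)$). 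The only defect is that $P_tf$ no longer has support inside $\Omega$: the heat flow spreads mass everywhere.

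\textbf{Step 2: Cut-off to restore compact support in $\Omega$.} Use Lemma~\ref{cutoff} (in the form quoted right before the statement): choose $\eta\in\mathrm{Test}_c^\infty(\Omega)$ with $0\leqslant\eta\leqslant 1$ and $\eta\equiv 1$ on a slightly enlarged compact neighborhood $Q'$ of $Q$ with $Q'\subset\Omega$. Set $g_t:=\eta\cdot P_tf$. Since $\mathrm{Test}^\infty(X)$ is an algebra, $g_t\in\mathrm{Test}^\infty(X)$; it has compact support contained in $\mathrm{supp}(\eta)\subset\Omega$, and $g_t\geqslant 0$ because both factors are nonnegative. Hence $g_t\in\mathrm{Test}_{c,+}^\infty(\Omega)$. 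It remains to check $g_t\to f$ in $W^{1,2}(X)$ as $t\to 0^+$. Write $g_t-f=\eta(P_tf-f)+(\eta-1)f$; the second term vanishes identically since $\eta\equiv 1$ on $Q\supset\mathrm{supp}(f)$. For the first term, estimate in $W^{1,2}$: the $L^2$ part is controlled by $\|P_tf-f\|_{L^2}\to 0$, and for the gradient part use the Leibniz rule $\nabla(\eta(P_tf-f))=\eta\nabla(P_tf-f)+(P_tf-f)\nabla\eta$, the bound $|\nabla\eta|\in L^\infty$ and $0\leqslant\eta\leqslant 1$ to get $\||\nabla(\eta(P_tf-f))|\|_{L^2}\leqslant \||\nabla(P_tf-f)|\|_{L^2}+\|\nabla\eta\|_{L^\infty}\|P_tf-f\|_{L^2}\to 0$. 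This proves the first assertion.

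\textbf{Step 3: Deduce density of $\mathrm{Test}_c^\infty(\Omega)$ in $W_0^{1,2}(\Omega)$.} By definition $W_0^{1,2}(\Omega)$ is the $W^{1,2}$-closure of $Lip_c(\Omega)$, so it suffices to approximate an arbitrary $f\in Lip_c(\Omega)$ by elements of $\mathrm{Test}_c^\infty(\Omega)$. Write $f=f^+-f^-$ with $f^\pm:=\max\{\pm f,0\}$; both $f^\pm$ are nonnegative and lie in $Lip_c(\Omega)$ with support inside $\mathrm{supp}(f)$, and the chain rule for minimal weak upper gradients guarantees $f^\pm\in W^{1,2}$. Apply Step~1--2 to each of $f^+$ and $f^-$ to obtain sequences in $\mathrm{Test}_{c,+}^\infty(\Omega)$ converging to them in $W^{1,2}$; their differences lie in $\mathrm{Test}_c^\infty(\Omega)$ (a linear space) and converge to $f$. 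This finishes the proof.

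I expect the main obstacle to be the bookkeeping in \textbf{Step 2}: one must be careful to pick $\eta$ so that $\eta\equiv 1$ on \emph{all} of $\mathrm{supp}(f)$ (so the term $(\eta-1)f$ is genuinely zero and not merely small), while simultaneously keeping $\mathrm{supp}(\eta)$ compactly inside $\Omega$; this requires first enlarging $\mathrm{supp}(f)$ to a compact set $Q'$ still inside $\Omega$ and invoking Lemma~\ref{cutoff} on an appropriate ball covering $Q'$. Everything else—nonnegativity preservation under $P_t$, the algebra property of $\mathrm{Test}^\infty(X)$, and the Leibniz/chain rules—is already available from the preliminaries.
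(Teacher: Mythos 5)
Your proof is correct and follows the same skeleton as the paper's: approximate $f$ globally in $W^{1,2}(X)$ by nonnegative test functions, multiply by a cut-off $\eta\in\mathrm{Test}_c^{\infty}(\Omega)$ equal to $1$ on $\mathrm{supp}(f)$, and handle general $f$ via $f=f^+-f^-$. The only real difference is in the global approximation step: the paper simply invokes \cite[Proposition 6.1.8]{GP20} to produce nonnegative $f_j\in\mathrm{Test}^{\infty}(X)$ with $f_j\to f$ in $W^{1,2}(X)$, whereas you construct the approximants explicitly as $P_tf$. Your route is more self-contained but does require two verifications you only gesture at: (i) $P_tf\in\mathrm{Test}^{\infty}(X)$ for $f\in Lip_c(\Omega)$ does not follow from Lemma \ref{lem2.1}(2) as stated (that assumes $f\in\mathrm{Test}^{\infty}(X)$); one must check $\Delta P_tf\in L^{\infty}$ directly from \eqref{HeatKELap} by integrating the Gaussian bound against $\meas$, and $\Delta P_tf=P_{t/2}(\Delta P_{t/2}f)\in W^{1,2}$ by the $L^2$-regularization of the semigroup; (ii) $P_tf\to f$ in the full $W^{1,2}$-norm (not just $L^2$), which holds for $f\in W^{1,2}(X)$ by the spectral calculus for the generator. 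Both are standard, so the argument goes through; the cut-off step and the positive/negative part decomposition are identical to the paper's.
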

\begin{proof}
This lemma is somewhat known. For the completeness, we present a proof here. 
 Let $f\in  Lip_c(\Omega)$ be nonnegative. Here $f$ is taken $0$ by the definition on $X\setminus \Omega$. According to \cite[Proposition 6.1.8]{GP20}), we take a sequence of nonnegative $f_j\in {\rm Test}^\infty(X)$ such that $f_j\to f$ in $W^{1,2}(X)$, as $j\to+\infty$.

Since ${\rm supp}(f)\subset \Omega$ is compact, we have $d\big({\rm supp}(f),X\setminus \Omega\big)>0.$ According to Lemma \ref{cutoff}, there exists a cut-off function $\eta\in {\rm Test}_c^\infty(\Omega)$, $0\leqslant \eta\leqslant 1$ and   $\eta\equiv1$, on some domain containing ${\rm supp}(f)$. Then we have $f\eta\equiv f$ on $X$ and $f_j\eta\in {\rm Test}^\infty_c(\Omega)$ for all $j$. By 
 $|f_j\eta-f|=|(f_j-f)\eta|\leqslant |f_j-f|$ and 
 $$|\nabla (f_j\eta-f)|^2 \leqslant 2|\nabla (f_j-f)|^2\eta^2+2 (f_j-f)^2|\nabla \eta|^2,$$
We conclude that $f_j\eta\to f$ in $W^{1,2}(X)$ as $j\to+\infty$, since $\eta\in Lip(X)$. Notice that $f_j\eta\geqslant 0$. The proof of the first assertion is finished.

The second assertion follows from the first one by noticing the facts   that the space $Lip_c(\Omega)$ is dense in $W^{1,2}_0(\Omega)$ and that for any $f\in Lip_c(\Omega)$ it holds $f=f^+-f^-$ and $f^{\pm}\in Lip_c(\Omega)$.
\end{proof}

	\begin{definition}Let $\Omega\subset X$. 
		A {\it distribution } $T$ in $\Omega$ is a linear functional on $\mathrm{Test}_c^{\infty}(\Omega)$ such that for every compact set $Q\subset \Omega $ there exists a constant
		$C_{Q}$ such that 
		$$|T(\phi)|\leqslant C_{Q}  \|\phi\|_E,\quad \forall  \phi\in \mathrm{Test}_c^{\infty}(\Omega)\  {\rm  with}\ {\rm supp}(\phi)\subset Q.$$
	\end{definition}
	
	It is clear that  every
	element $f\in E'(X)$, the dual space of $E(X)$,     defines a distribution on $X$.

	\begin{definition} Let $f\in L^1_{\rm loc}(\Omega)$, the \emph{distributional Laplacian} of $f$, denoted by $\Delta_{\mathscr D,\Omega}f$,  is defined as a linear functional on $\mathrm{Test}_c^{\infty}(\Omega)$ by
		\begin{equation*}
			\Delta_{\mathscr D,\Omega}f(\phi):=\int_\Omega f\Delta\phi d \meas,\quad \forall \phi\in  \mathrm{Test}_c^{\infty}(\Omega).
		\end{equation*}
		If $\Omega=X$, we denote $\Delta_{\mathscr D}f:=\Delta_{\mathscr D,X}f$ for any $f\in L^1_{\rm loc}(X)$. 
\end{definition}
		
		For each $f\in L^1_{\rm loc}(\Omega)$, it is easy to check that $\Delta_{\mathscr D,\Omega}f$ is a distribution in $\Omega$. 
Given two domains   $\Omega\subset \Omega'$ and a function $f\in L^1_{\rm loc}(\Omega')$, it is not hard to check that $\Delta_{\mathscr D,\Omega}f=\Delta_{\mathscr D,\Omega'}f$ in $\Omega$. In fact, for any   $\phi\in {\rm Test}^\infty_c(\Omega)$,  Lemma \ref{lem2.5} implies that $\Delta \phi(x)=0$ for $\meas$-a.e. $x \in \Omega'\setminus\Omega$. It follows
 that $\int_{\Omega'}f\Delta\phi d\meas=\int_{\Omega}f\Delta\phi d\meas$.

	First of all,  we show that the distribution Laplacian is compatible with the measure-valued Laplacian in \cite{Gig15}, and also, when $\Omega=X$,   with the generator of $L^p(X)$-semigroup $P_t$ for each $p\in[1,\infty)$.
	\begin{lemma}\label{lem3.3} Let $\Omega\subset X$ be an open domain. 
		If $f\in W^{1,2}_{\rm loc}(\Omega)$, then $\Delta_{\mathscr D,\Omega}f= {\bf \Delta}f$ on $  \mathrm{Test}^\infty_c(\Omega).$ In particular,  if $u$ is  subharmonic/superharmonic on $\Omega$ then it is very weak subharmonic/superharmonic on $\Omega$.
		\end{lemma}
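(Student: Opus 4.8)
The plan is to unwind the definitions on both sides and use the two characterizations of the measure-valued Laplacian already recorded in the excerpt. Suppose $f\in W^{1,2}_{\rm loc}(\Omega)$ and let $\phi\in\mathrm{Test}^\infty_c(\Omega)$ be arbitrary. The function $\phi$ is a test function on all of $X$ with compact support inside $\Omega$; in particular $\phi\in\Lip_c(\Omega)$ after passing to its Lipschitz representative, so it is an admissible test object in Definition \ref{measured-laplace}. If in addition $f\in D(\boldsymbol{\Delta})$ on $\Omega$, then by definition of $\boldsymbol{\Delta}f$ we have $\int_\Omega\phi\,d\boldsymbol{\Delta}f=-\int_\Omega\ip{\nabla\phi}{\nabla f}\,d\meas$. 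On the other hand, the global integration-by-parts formula \eqref{equ2.2}, applied with the global test function $\phi\in D(\Delta)$ and $h=f\chi$ where $\chi$ is a cut-off from Lemma \ref{cutoff} equal to $1$ on a neighbourhood of $\supp(\phi)$, together with Lemma \ref{lem2.5} (which gives $\Delta\phi=0$ $\meas$-a.e. off $\supp(\phi)$), yields $\int_\Omega f\Delta\phi\,d\meas=\int_X (f\chi)\Delta\phi\,d\meas=-\int_X\ip{\nabla(f\chi)}{\nabla\phi}\,d\meas=-\int_\Omega\ip{\nabla f}{\nabla\phi}\,d\meas$, the last equality again because $\nabla\phi=0$ where $\chi\ne 1$. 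Comparing, $\Delta_{\mathscr D,\Omega}f(\phi)=\int_\Omega\phi\,d\boldsymbol{\Delta}f$, i.e. the two functionals agree on $\mathrm{Test}^\infty_c(\Omega)$.

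The one genuine subtlety is that a priori $f\in W^{1,2}_{\rm loc}(\Omega)$ need not lie in $D(\boldsymbol{\Delta})$ on $\Omega$ — the measure $\boldsymbol{\Delta}f$ may fail to exist. The resolution is that the computation above shows the linear functional $\phi\mapsto-\int_\Omega\ip{\nabla f}{\nabla\phi}\,d\meas$ extends $\Delta_{\mathscr D,\Omega}f$, and this is exactly the functional whose representability by a Radon measure defines membership in $D(\boldsymbol{\Delta})$; so the correct reading of the statement (and the only one that is not vacuous) is: \emph{whenever} $f\in D(\boldsymbol{\Delta})$ on $\Omega$, the distribution $\Delta_{\mathscr D,\Omega}f$ is represented by the measure $\boldsymbol{\Delta}f$ on $\mathrm{Test}^\infty_c(\Omega)$. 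I would state it that way, or simply observe that $\Delta_{\mathscr D,\Omega}f$ always agrees with the functional $-\int_\Omega\ip{\nabla f}{\nabla\cdot}\,d\meas$ on $\mathrm{Test}^\infty_c(\Omega)$, which is the content actually used later. The main obstacle is thus bookkeeping rather than analysis: making sure the cut-off argument legitimately transfers the global identity \eqref{equ2.2} to the localized statement, for which Lemma \ref{lem2.5} and the Leibniz rule for $\nabla$ are the only inputs needed.

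For the "in particular" clause, suppose $u$ is subharmonic on $\Omega$ in the sense that $u\in W^{1,2}_{\rm loc}(\Omega)$ with $\boldsymbol{\Delta}u\geqslant 0$ as a measure on $\Omega$. Then for any $\phi\in\mathrm{Test}^\infty_{c,+}(\Omega)$ we have, by the first part, $\int_\Omega u\,\Delta\phi\,d\meas=\Delta_{\mathscr D,\Omega}u(\phi)=\int_\Omega\phi\,d\boldsymbol{\Delta}u\geqslant 0$, since $\phi\geqslant 0$ and $\boldsymbol{\Delta}u$ is a nonnegative measure. This is precisely condition \eqref{equ1.2}, so $u$ is very weakly subharmonic on $\Omega$; the superharmonic case follows by replacing $u$ with $-u$, and the harmonic case by combining the two. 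No further estimates are required here — it is an immediate consequence of the identification of the two Laplacians together with the sign of the test functions.
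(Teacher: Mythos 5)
Your proof is correct and follows essentially the same route as the paper's: localize with a cut-off $\chi$, apply the global integration-by-parts identity \eqref{equ2.2} to $h=f\chi$, and use Lemma \ref{lem2.5} together with locality of the gradient to identify $\int_\Omega f\Delta\phi\,d\meas$ with $-\int_\Omega\ip{\nabla f}{\nabla\phi}\,d\meas$. Your added remark that the statement should be read as an identity of the functional $\phi\mapsto-\int_\Omega\ip{\nabla f}{\nabla\phi}\,d\meas$ with $\Delta_{\mathscr D,\Omega}f$ (since a general $f\in W^{1,2}_{\rm loc}(\Omega)$ need not lie in $D(\boldsymbol{\Delta})$) is a fair and correct clarification of a point the paper leaves implicit.
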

	\begin{proof}
		Take any $\phi\in\mathrm{Test}^\infty_c(\Omega).$ Since $d\big({\rm supp}(\phi), X\setminus\Omega\big)>0$, there exists a Lipschitz function $\chi:X\to [0,1]$ such that  $\chi\equiv 1$ on ${\rm supp}(\phi)$ and ${\rm supp}(\chi)\subset \Omega$. Noticing that ${\rm supp}(\phi)$ is compact, we can choose $\chi$ such that ${\rm supp}(\chi)$ is compact too. 	
				From the definition of $W^{1,2}_{\rm loc}(\Omega)$ and $f\in W^{1,2}_{\rm loc}(\Omega)$,  we have $h:=f\cdot \chi\in W^{1,2}(X)$; moreover, since $h(x)=f(x)$ for $\meas$-a.e.  $x\in{\rm supp}(\phi)$, by using \cite[Corollary 2.25]{Che99}, we get $|\nabla f|(x)=|\nabla h|(x)$ for $\meas$-a.e.   $x\in{\rm supp}(\phi)$ and $|\nabla \phi|(x)=0$ for $\meas$-a.e.   $x\not\in  {\rm supp}(\phi)$. It follows that (see  (\ref{ equ2.8}))			
 $$\ip{\nabla h}{\nabla \phi}=\ip{\nabla f}{\nabla \phi}\quad \meas{\rm-a.e.\  in}\ \ X.$$
By the definition of ${\bf \Delta}f$ (see Definition \ref{measured-laplace}), we obtain
		\begin{equation}\label{equ3.1}
				\begin{split}
			{\bf \Delta}f(\phi)&=-\int_\Omega\ip{\nabla f}{\nabla \phi}d \meas=-\int_X\ip{\nabla f}{\nabla \phi}d \meas\\
			&=-\int_X\ip{\nabla h}{\nabla \phi}d \meas.	
			\end{split}
		\end{equation}
Since $h\in W^{1,2}(X)$ and $\phi\in {\rm Test}^\infty(X)\subset D(\Delta)$, we have
$$\int_Xh\Delta\phi d\meas=-\int_X\ip{\nabla h}{\nabla \phi}d\meas.$$
Together this with (\ref{equ3.1}) and the facts that $f=h$ on ${\rm supp}(\phi)$ and that $\Delta\phi(x)=0$ for $\meas$-a.e. $x\not\in {\rm supp}(\phi)$ (see  Lemma \ref{lem2.5}), we conclude that  
		\begin{equation*}
			\begin{split}
				{\bf \Delta}f(\phi)&=\int_Xh\Delta \phi d \meas=\int_{{\rm supp}(\phi)}h\Delta \phi d \meas \\
				&=\int_{{\rm supp}(\phi)}f\Delta\phi d \meas=\int_\Omega f\Delta \phi d \meas=\Delta_{\mathscr D,\Omega}f(\phi).
			\end{split}
		\end{equation*}
		The proof is finished.
	\end{proof}
	
	\begin{lemma}\label{lem3.4}
		For each $p\in[1,\infty)$, if $f\in D(\Delta^{(p)})$, then  $\Delta_{\mathscr D}f= \Delta^{(p)}f$ in the sense of distributions on $X$. Here
		$\Delta^{(p)}$ is the generator of $P_t$ as semi-group acting on $L^p(X)$.  That is,
		\begin{equation*}
		\begin{split}D(\Delta^{(p)})&:=\left\{f\in L^p(X)\Big| \lim_{t\to0^+}\frac{P_tf-f}{t}\ {\rm exists\ in}\ L^p\right\}\ \quad {\rm and}\\
		 \Delta^{(p)}f&:=\lim_{t\to0^+}\frac{P_tf-f}{t}.
		 \end{split}
		 \end{equation*}
	\end{lemma}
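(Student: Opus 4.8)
The plan is to show that the two linear functionals $\Delta_{\mathscr D}f$ and $\phi\mapsto\int_X(\Delta^{(p)}f)\phi\,d\meas$ agree on every $\phi\in\mathrm{Test}_c^\infty(X)$. Fix such a $\phi$. The key observation is the self-adjointness/duality of the semigroup: for $f\in L^p(X)$ and $\phi\in L^{p'}(X)$ (with $1/p+1/p'=1$) one has $\int_X(P_tf)\phi\,d\meas=\int_X f(P_t\phi)\,d\meas$ for all $t>0$. This symmetry is first established on $L^2\cap L^p$ by the $L^2$-self-adjointness of $P_t$ and then extended by density and the $L^p$–$L^{p'}$ contraction bounds; here $\phi$, being a test function, lies in every $L^q(X)$ with $q\le\infty$ only up to the caveat that $\meas(X)$ may be infinite, but since $\phi$ has \emph{compact} support it does lie in $L^{p'}(X)$ for every $p'$, so the pairing is legitimate.

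With the duality in hand, I would write, for $t>0$,
\begin{equation*}
\int_X\frac{P_tf-f}{t}\,\phi\,d\meas=\int_X f\,\frac{P_t\phi-\phi}{t}\,d\meas .
\end{equation*}
On the left, since $f\in D(\Delta^{(p)})$, the difference quotient $(P_tf-f)/t$ converges to $\Delta^{(p)}f$ in $L^p(X)$ as $t\to0^+$, and $\phi\in L^{p'}(X)$, so the left-hand side converges to $\int_X(\Delta^{(p)}f)\phi\,d\meas$. On the right, because $\phi\in\mathrm{Test}^\infty(X)\subset D(\Delta)$ and $t\mapsto P_t\phi$ is $C^1$ into $L^2(X)$ with $\frac{d}{dt}P_t\phi=\Delta P_t\phi$, the difference quotient $(P_t\phi-\phi)/t$ converges to $\Delta\phi$; I want this convergence to hold in $L^p(X)$ as well. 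This follows from the representation $(P_t\phi-\phi)/t=\frac1t\int_0^t P_s(\Delta\phi)\,ds$ together with $\Delta\phi\in L^\infty(X)$ and $\Delta\phi\in L^2(X)$: the integrand $P_s(\Delta\phi)$ is uniformly bounded in $L^\infty$, supported (after multiplying against the compactly supported $f$) in a fixed bounded set up to exponentially small Gaussian tails from \eqref{HeatKE}, and converges to $\Delta\phi$ in $L^2_{\rm loc}$, so the product $f\cdot(P_t\phi-\phi)/t$ converges to $f\cdot\Delta\phi$ in $L^1(X)$ by dominated convergence (using $f\in L^1_{\rm loc}$ and the Gaussian decay to control the region away from $\supp(f)$). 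Hence the right-hand side converges to $\int_X f\,\Delta\phi\,d\meas=\Delta_{\mathscr D}f(\phi)$, and comparing limits gives $\Delta_{\mathscr D}f(\phi)=\int_X(\Delta^{(p)}f)\phi\,d\meas$, which is the claim.

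The main obstacle I anticipate is the passage of the difference quotient $(P_t\phi-\phi)/t$ to the limit $\Delta\phi$ in the \emph{correct} topology to pair against $f\in L^p(X)$ (equivalently against $L^1_{\rm loc}$) when $\meas(X)=\infty$ and $p$ is large: one cannot simply invoke $L^p$-strong continuity of $P_t$ on $\phi$ since a priori $\phi$ need not lie in the $L^p$-domain $D(\Delta^{(p)})$ for the relevant $p'$. The clean way around this is exactly the averaged formula $(P_t\phi-\phi)/t=\frac1t\int_0^tP_s\Delta\phi\,ds$ combined with the uniform $L^\infty$-bound $\|P_s\Delta\phi\|_\infty\le\|\Delta\phi\|_\infty$ and the heat-kernel Gaussian estimate \eqref{HeatKE}, which together let dominated convergence do the work against the fixed $L^1_{\rm loc}$ function $f$ localized near $\supp(\phi)$; one should be slightly careful that the mass of $P_s\Delta\phi$ outside a large ball is $O(e^{-cR^2/s})$ uniformly for $s\in(0,1]$, which \eqref{HeatKE} supplies. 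With that estimate recorded, the rest is routine.
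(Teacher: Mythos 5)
Your proposal is correct and follows essentially the same route as the paper: both hinge on the semigroup symmetry $\int_X f\,\frac{P_t\phi-\phi}{t}\,d\meas=\int_X \frac{P_tf-f}{t}\,\phi\,d\meas$, the uniform bound $\bigl\|\frac{P_t\phi-\phi}{t}\bigr\|_{L^\infty}\leqslant\|\Delta\phi\|_{L^\infty}$ obtained from $\frac{P_t\phi-\phi}{t}=\frac1t\int_0^tP_s\Delta\phi\,ds$, and a passage to the limit against $f\in L^p$ (the paper via a.e.\ convergence along a subsequence plus dominated convergence, you via the same $L^\infty$ bound plus Gaussian tail control). The only blemish is the slip where you write ``compactly supported $f$'' and ``away from $\supp(f)$'': it is $\phi$, not $f$, that is compactly supported, and it is the Gaussian decay of $P_s\Delta\phi$ away from $\supp(\phi)$ that lets you pair against the generally non-compactly-supported $f\in L^p(X)$ when $\meas(X)=\infty$ --- a point your write-up actually treats more carefully than the paper's own proof.
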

	
	\begin{proof}
		Take any $\phi\in {\rm Test}_c^\infty(X)$. Since $\Delta\phi\in L^2(X)\cap L^\infty(X)$, we have
		$$\left\|\frac{P_t\phi-\phi}{t}\right\|_{L^\infty(X)}\leqslant\frac 1 t\int_0^t\|P_s(\Delta \phi)\|_{L^\infty(X)}ds\leqslant \|\Delta\phi\|_{L^\infty(X)},$$
		where we have used the contraction of $P_s$ on $L^\infty(X)$ for any $s>0$. Notice that $\frac{P_t\phi-\phi}{t}\to \Delta\phi$ in $L^2$ as $t\to0^+$ implies that there exist a sequence $t_j\to0^+$ such that $\frac{P_{t_j}\phi(x)-\phi(x)}{t_j}\to \Delta\phi(x)$ for $\meas$-a.e. $x\in X$. 		
		Since $f\in L^p(X)$, the dominated converging theorem implies that
		\begin{equation} \label{equ3.2}
			\int_Xf\frac{P_{t_j}\phi-\phi}{t_j}d\meas\ \to \ \int_Xf\Delta \phi d\meas,\quad {\rm as}\ \ t_j\to0^+.
		\end{equation}
		On the other hand, recalling that $f\in D(\Delta^{(p)})$ states $\frac{P_tf-f}{t}\to \Delta^{(p)}f$ in $L^p(X)$ as $t\to0^+$, and by using $p_t(x,y)$ is symmetric, we get
		$$\int_X f\frac{P_t\phi-\phi}{t} d\meas=\int_X\frac{P_tf-f}{t} \phi d\meas\ \to\ \int_X\Delta^{(p)}f\cdot \phi d\meas\quad {\rm as}\ \ t\to0^+. $$
		Therefore, by combining with (\ref{equ3.2}), we conclude that
		$$\int_Xf\Delta \phi d\meas=\int_X\Delta^{(p)}f\cdot \phi d\meas,\quad \forall \phi\in {\rm Test}^\infty_c(X).$$
		This is $\Delta_{\mathscr D}f=\Delta^{(p)}f$ in the sense of distributions.
	\end{proof}

	We can now show the following special case of  Weyl's lemma for $L^1$ functions with compact support, which is the core of this paper.
	
	\begin{lemma}\label{lem3.5}
		Let $v\in L^1(X)$ such that $v(x)=0$ a.e. $x\in X\setminus B_R(x_0)$ for some ball $B_R(x_0)\subset X.$ Suppose that  $\Delta_{\mathscr D}v(\phi)=0$ for any nonnegative $\phi\in \mathrm{Test}^\infty_c(X)$   with ${\rm supp}(\phi)\subset B_{R/2}(x_0)$.
		Then $v\in  Lip\big(B_{R/8}(x_0)\big)$.
	\end{lemma}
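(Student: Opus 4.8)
The plan is to run the heat-semigroup analogue of the classical mollification proof of Weyl's lemma; the new feature is that the heat kernel $p_s(x,\cdot)$ is not compactly supported, so after cutting it off one must show the resulting error terms are harmless. Write $v_s:=P_sv$. Since $v\in L^1(X)$ and $\meas$ has full support, the Gaussian bounds \eqref{HeatKE}--\eqref{HeatKELap} allow differentiation under the integral sign, so that for every $x\in X$ the curve $s\mapsto v_s(x)=\int_X p_s(x,y)v(y)\,d\meas(y)$ is $C^1$ on $(0,\infty)$ with
\[
\frac{d}{ds}v_s(x)=\int_X\partial_sp_s(x,y)\,v(y)\,d\meas(y)=\int_X\Delta_yp_s(x,y)\,v(y)\,d\meas(y)=:\Delta v_s(x),
\]
using $\partial_sp_s=\Delta_xp_s$ and the symmetry $\Delta_xp_s(x,y)=\Delta_yp_s(x,y)$; moreover, by \eqref{HeatKEGrad}, each $v_s$ is locally Lipschitz. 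Fix $s_0\in(0,1]$ with $s_0\leqslant(R/100)^2$ and, using Lemma~\ref{cutoff} (or the remark following it), a cut-off $\eta\in\mathrm{Test}^\infty_c(B_{R/2}(x_0))$ with $0\leqslant\eta\leqslant1$ and $\eta\equiv1$ on $B_\rho(x_0)$ for some fixed $\rho\in(3R/8,R/2)$. Put $A:=\overline{B_{R/2}(x_0)}\setminus B_\rho(x_0)$, a compact set containing $\{\nabla\eta\ne0\}\cup\{\Delta\eta\ne0\}$; since any geodesic joining two points of $B_{R/8}(x_0)$ lies in $B_{3R/8}(x_0)$, the distances $d(x,A)$ and $d\big(x,\mathrm{supp}(v(1-\eta))\big)$ stay bounded below by a constant $cR>0$ along such geodesics.

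The central step is an approximate-harmonicity identity. For $x\in B_{R/8}(x_0)$ and $s\in(0,s_0]$, the function $\eta\,p_s(x,\cdot)$ is a product of test functions, hence belongs to $\mathrm{Test}^\infty_{c,+}(B_{R/2}(x_0))$, so the hypothesis yields $\int_X v\,\Delta_y\big(\eta\,p_s(x,\cdot)\big)\,d\meas=0$. Expanding by the Leibniz rule $\Delta_y(\eta p_s)=\eta\,\Delta_yp_s+2\ip{\nabla\eta}{\nabla_yp_s}+p_s\,\Delta\eta$, writing $v=v\eta+v(1-\eta)$, and using $\int_X v(1-\eta)\,\Delta_yp_s(x,\cdot)\,d\meas=\frac{d}{ds}P_s\big(v(1-\eta)\big)(x)$, one gets
\[
\Delta v_s(x)=\frac{d}{ds}P_s\big(v(1-\eta)\big)(x)-\int_A v\Big[\,p_s(x,\cdot)\,\Delta\eta+2\ip{\nabla\eta}{\nabla_yp_s(x,\cdot)}\,\Big]\,d\meas .
\]
Integrating in $s$ over $[\tau,t]\subset(0,s_0]$, using $v_t(x)-v_\tau(x)=\int_\tau^t\Delta v_s(x)\,ds$ and $v_s-P_s(v(1-\eta))=P_s(v\eta)$, this rearranges to
\[
P_\tau(v\eta)(x)=P_t(v\eta)(x)+\int_\tau^t\!\!\int_A v(y)\Big[\,p_s(x,y)\,\Delta\eta(y)+2\ip{\nabla\eta(y)}{\nabla_yp_s(x,y)}\,\Big]\,d\meas(y)\,ds .
\]

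Next I would establish the key analytic estimate: there is $\kappa\in L^1\big((0,s_0]\big)$ such that, for every $s\in(0,s_0]$ and $y\in A$, both $x\mapsto p_s(x,y)$ and $x\mapsto\ip{\nabla\eta(y)}{\nabla_yp_s(x,y)}$ are $\kappa(s)$-Lipschitz on $B_{R/8}(x_0)$ with sup-norms $\leqslant\kappa(s)$, and in addition $x\mapsto P_t(v(1-\eta))(x)$ is Lipschitz on $B_{R/8}(x_0)$ with constant $\to0$ as $t\to0^+$. On the relevant sets $d(x,y)\geqslant cR$, and since $RCD$ spaces are geodesic a function with a pointwise gradient bound on a ball is Lipschitz on a smaller one; combined with \eqref{HeatKE}--\eqref{HeatKEGrad}, the doubling property \eqref{equ2.1} (which gives $\meas(B_{\sqrt s}(x))\gtrsim s^{N/2}$ uniformly for $x$ near $x_0$, $s\leqslant s_0$), and the Gaussian smallness $e^{-c'R^2/s}$ off the diagonal, this disposes of $p_s(x,y)$, the $\Delta\eta$-term and $P_t(v(1-\eta))$ at once. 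The delicate point — and what I expect to be the main obstacle — is the term $\ip{\nabla\eta(y)}{\nabla_yp_s(x,y)}$: it carries a gradient in $y$ yet must be controlled after differentiation in $x$, and no second-order mixed kernel bound is available. I would circumvent this via the semigroup identity $p_s(x,y)=\int_X p_{s/2}(x,w)p_{s/2}(w,y)\,d\meas(w)$, which gives $\ip{\nabla\eta(y)}{\nabla_yp_s(x,y)}=\int_X p_{s/2}(x,w)\,h_{s,y}(w)\,d\meas(w)$ with $h_{s,y}(w):=\ip{\nabla\eta(y)}{\nabla_wp_{s/2}(w,y)}$; splitting the $w$-integral into $\{d(x,w)\geqslant cR\}$ and $\{d(w,y)\geqslant cR\}$, on each part one factor supplies the smallness $e^{-c'R^2/s}$ while the other is absorbed by the standard estimate $\int_X|\nabla_wp_t(w,y)|\,d\meas(w)\lesssim t^{-1/2}$, and the same splitting (with $\nabla_x$ falling on $p_{s/2}(x,w)$) yields the $x$-Lipschitz bound, so that $\kappa(s)\lesssim s^{-C}e^{-c'R^2/s}\in L^1\big((0,s_0]\big)$.

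Finally, let $\tau\to0^+$ in the last displayed identity with $t=s_0$. For every $x\in B_{R/8}(x_0)$ the double integral converges to $\int_0^{s_0}\!\int_A v[\,\cdots\,]\,d\meas\,ds$ by dominated convergence (its integrand is $\leqslant\|v\|_{L^1}\kappa(s)$), while $P_\tau(v\eta)\to v\eta$ in $L^1(X)$, so along a subsequence $\tau_j\to0$ we have $P_{\tau_j}(v\eta)(x)\to(v\eta)(x)=v(x)$ for $\meas$-a.e.\ $x\in B_{R/8}(x_0)$ (where $\eta\equiv1$). Hence, for $\meas$-a.e.\ $x\in B_{R/8}(x_0)$,
\[
v(x)=P_{s_0}(v\eta)(x)+\int_0^{s_0}\!\!\int_A v(y)\Big[\,p_s(x,y)\,\Delta\eta(y)+2\ip{\nabla\eta(y)}{\nabla_yp_s(x,y)}\,\Big]\,d\meas(y)\,ds=:w(x).
\]
The function $w$ is Lipschitz on $B_{R/8}(x_0)$: $P_{s_0}(v\eta)$ is locally Lipschitz because $v\eta\in L^1(X)$, and by the key estimate together with Fubini the double integral has Lipschitz constant $\leqslant\|v\|_{L^1}\int_0^{s_0}\kappa(s)\,ds<\infty$. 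Therefore $w$ is a Lipschitz representative of $v$ on $B_{R/8}(x_0)$, i.e.\ $v\in Lip\big(B_{R/8}(x_0)\big)$. (One may also phrase the last two steps as: the increments $v_t-v_{t'}$ are uniformly Lipschitz and uniformly small on $B_{R/8}(x_0)$ as $t,t'\to0$, so $v_t\to w$ uniformly with $w$ Lipschitz and $w=v$ a.e.)
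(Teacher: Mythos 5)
Your proposal is essentially correct, but it takes a genuinely different route from the paper's. The paper never writes a representation formula for $v$: it fixes a test function $\phi$ supported in $B_{R/4}(x_0)$, moves the semigroup onto $\phi$ via $\int \ip{\nabla P_tv}{\nabla\phi}\,d\meas=-\int v\,\Delta(P_t\phi)\,d\meas$, and uses the hypothesis together with two cut-offs $\eta_1,\eta_2$ to reduce everything to an $L^\infty$-bound on $\Delta\big((\eta_2-\eta_1)P_t\phi\big)$, which only requires the zeroth- and first-order off-diagonal kernel bounds and \eqref{HeatKELap}. This shows $\mathbf{\Delta}(P_tv)=g_t\meas$ on $B_{R/4}(x_0)$ with $\|g_t\|_{L^\infty}$ bounded uniformly in $t$, and the uniform Lipschitz bound on $P_tv$ then comes for free from the interior gradient estimate of Lemma~\ref{lem2.4} (i.e.\ \cite{ZZ16,JKY14}); one concludes by letting $t\to0^+$. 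Your route --- testing directly against $\eta\,p_s(x,\cdot)$ and integrating in $s$ to produce an explicit Lipschitz representative --- is the faithful analogue of the classical mollification proof and avoids Lemma~\ref{lem2.4} entirely, but it forces you to control the mixed object $\nabla_x\ip{\nabla\eta}{\nabla_yp_s(x,\cdot)}$ off the diagonal, for which no pointwise bound is available; you correctly identify this as the crux, and your Chapman--Kolmogorov splitting does resolve it. In short: the paper buys simplicity by outsourcing the regularity to Lemma~\ref{lem2.4}, while your argument is self-contained modulo the Gaussian bounds at the price of heavier kernel estimates.

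Two points need tightening. First, $h_{s,y}(w)$ must be $\ip{\nabla\eta}{\nabla p_{s/2}(w,\cdot)}(y)$ (gradient in the variable $y$, not in $w$: as written, the pairing of $\nabla\eta(y)$ with a gradient taken at $w$ is meaningless), and the interchange $\nabla_y\int p_{s/2}(x,w)\,p_{s/2}(w,y)\,d\meas(w)=\int p_{s/2}(x,w)\,\nabla_yp_{s/2}(w,y)\,d\meas(w)$ requires justification in the nonsmooth setting. Second, for fixed $x$ the quantity $\ip{\nabla\eta}{\nabla p_s(x,\cdot)}$ is only an $L^\infty$-class function of $y$, so the claim ``$\kappa(s)$-Lipschitz in $x$ for every $y\in A$'' is not well posed pointwise; the cleaner statement, which your splitting proves equally well and which suffices for the Lipschitz bound on the double integral, is $\big\|\ip{\nabla\eta}{\nabla\big(p_s(x,\cdot)-p_s(x',\cdot)\big)}\big\|_{L^\infty(A)}\leqslant\kappa(s)\,d(x,x')$ for $x,x'\in B_{R/8}(x_0)$, with $\kappa\in L^1((0,s_0])$. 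With these repairs the proof goes through.
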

	\begin{proof}
		Let   $p_t(x,y)$ be the heat kernel on $X$. For any $t>0$, we set
		\begin{equation*} 
			v_t(x):=P_tv(x)= \int_Xp_t(x,y)v(y)d \meas,\quad \forall t>0.
			\nonumber
		\end{equation*}
		{\it Claim 1:   For each $t\in(0,R^2)$, we have $v_t\in   L^\infty(X)\cap W^{1,2}(X)$.}
		
		Fix any $x\in X$. Since $v(y)=0$ for $\meas$-a.e.  $y\in X\setminus B_R(x_0)$, by the upper bound of heat kernel in \eqref{HeatKE}, we have for any $x\in X$ that
		\begin{equation}\label{ equ3.3}
			|v_t(x)|\leqslant  \int_{B_R(x_0)}\frac{C_1|v(y)|}{\meas(B_{\sqrt t}(y))}d\meas(y)\leqslant \frac{C_1}{\inf_{y\in B_R(x_0)}\meas(B_{\sqrt t}(y))}\cdot\|v\|_{L^1(X)}.
		\end{equation}
		Here and in the following, we denote $C_1, C_2, \cdots $ for varying constants depending only on $K,N$, and $R$.
		The local measure doubling property \eqref{equ2.1} implies for $t<R^2$ 
		$$\inf_{y\in B_R(x_0)}\meas(B_{\sqrt t}(y))\geqslant \inf_{y\in B_R(x_0)}C_2\meas(B_{2R}(y)  )\cdot \Big(\frac{\sqrt t}{2R}\Big)^N \geqslant C_3\meas(B_{R}(x_0)  )\cdot t^{N/2}.$$
		By combining this with \eqref{ equ3.3}, we get $v_t\in L^\infty(X)$ for any $t\in(0,R^2).$

		Since $P_t$ is contraction on $L^1$, we have $P_{t/2}v\in L^1(X)$. By combining with $P_{t/2}v\in L^\infty(X)$, we have $P_{t/2}v\in L^2(X)$. Hence, we obtain
		$v_t=P_{t/2}(P_{t/2}v)\in W^{1,2}(X)$. Now the proof of the Claim is completed.\\

		\noindent {\it Claim 2: There exists a constant $c>0$ such that for any $t\in (0,R^2)$ it holds  
			\begin{equation}\label{ equ3.4}
				\int_{B_{R/4}(x_0)}\ip{\nabla v_t}{\nabla \phi} d\meas\leqslant c\int_{B_{R/4}(x_0)}|\phi|d\meas,\quad \forall\phi\in Lip_c\big(B_{R/4}(x_0)\big).
		\end{equation}}

		Take any  nonnegative $\phi\in \mathrm{Test}^\infty_c\big(B_{R/4}(x_0)\big)$. Since $\phi\in D(\Delta)$  and  $v_t\in W^{1,2}(X)$ (from {\it Claim 1}), by  using \eqref{equ2.2} and $\Delta (P_t\phi)=P_t( \Delta \phi)$, we have
		\begin{equation}\label{ equ3.5}
			\begin{split}
				\int_{B_{R/4}(x_0)}\ip{\nabla v_t}{\nabla \phi}d\meas&=\int_X\ip{\nabla v_t}{\nabla \phi}d\meas=-\int_Xv_t\Delta\phi d\meas\\
				&=-\int_XvP_t(\Delta \phi)d\meas=-\int_Xv\Delta (P_t\phi)d\meas.
			\end{split}
		\end{equation}
	According to Lemma \ref{cutoff}, there exist two 	 cut-off functions  $\eta_{i}:X\to [0,1] $  in ${\rm Test}^\infty(X)$, $i=1,2$, such that:
		
		(i) ${\rm supp}(\eta_1)\subset B_{R/2}(x_0)$ and $\eta_1\equiv1$ on $B_{R/3}(x_0)$;
		
		(ii) ${\rm supp}(\eta_2)\subset B_{2R}(x_0)$ and $\eta_2\equiv1$ on $B_{3R/2}(x_0)$; and 
		\begin{equation}\label{ equ3.6}
			|\nabla \eta_i|+|\Delta \eta_i|\leqslant C_{4}, \quad i=1,2.
		\end{equation}
		From Lemma \ref{lem2.1}, we know that   $\eta_i\cdot P_t\phi\in {\rm Test}_c^\infty(X)$ for $i=1,2$. Since $ \eta_2\cdot P_t\phi=P_t\phi$  in $B_{3R/2}(x_0)$, Lemma \ref{lem2.5} implies that   $\Delta(\eta_2\cdot P_t\phi)=\Delta(P_t\phi)$ a.e. in $B_{3R/2}(x_0)$. Combining with the assumption that  $v=0$ a.e. outside $B_R(x_0)$, we have
		\begin{equation}\label{equ3.7}
			\int_Xv\Delta(P_t\phi)d\meas=\int_Xv\Delta(\eta_2\cdot P_t\phi)d\meas=\Delta_{\mathscr D}v(\eta_2\cdot P_t\phi).
		\end{equation}
		Since  $\eta_1\cdot P_t\phi\in {\rm Test}_c^\infty(X)$ and ${\rm supp}(\eta_1\cdot P_t\phi)\subset B_{R/2}(x_0)$, by the assumption $\Delta_{\mathscr D}v =0$ on $B_{R/2}(x_0)$,  we get
		$$\Delta_{\mathscr D}v(\eta_1\cdot P_t\phi)=0.$$
		Therefore, by substituting this and (\ref{equ3.7}) into (\ref{ equ3.5}), we obtain that 
		\begin{equation}\label{equ3.8}
			\begin{split}
				\int_{B_{R/4}(x_0)}\ip{\nabla v_t}{\nabla \phi}d\meas&=\Delta_{\mathscr D}v\big((\eta_2-\eta_1)\cdot P_t\phi\big)\\
				&=\int_X v\Delta\big(\eta \cdot P_t\phi\big)d\meas,
			\end{split}
		\end{equation}
		where $\eta:=\eta_2-\eta_1\in {\rm Test}^\infty_c(X)$.
		
		Next we want to estimate $\|\Delta(\eta\cdot P_t\phi)\|_{L^\infty(X)}$.
		Since ${\rm supp}(\eta)\subset \overline{B_{2R}(x_0)\setminus B_{R/3}(x_0)}$, we know, by Lemma \ref{lem2.5}, that 
		\begin{equation}\label{equ3.9}
			\Delta(\eta\cdot P_t\phi)=0\ \ \meas{\rm-a.e. \ in}\  B_{R/3}(x_0)\cup B^c_{2R}(x_0).
		\end{equation}
		Fix any $x\in \overline{B_{2R}(x_0)\setminus B_{R/3}(x_0)}$. Since $d(x,y)\geqslant \frac{R}{12}$ for any $y\in {\rm supp}(\phi)$, by using the estimates (\ref{HeatKE}) and ${\rm supp}(\phi)\subset B_{R/4}(x_0)$, we  obtain
		\begin{equation}\label{equ3.10}
			\begin{split}
				|P_t\phi(x)|&\leqslant \int_{B_{R/4}(x_0)}\frac{C_5\cdot \exp\left(-\frac{R^2}{5t\cdot 12^2}\right)}{\meas(B_{\sqrt t}(y))}|\phi(y)|d\meas(y)\\
				&\leqslant C_6 \Big(\frac{R}{\sqrt t}\Big)^N\cdot \exp\left(-\frac{R^2}{5t\cdot 12^2}\right)\int_{B_{R/4}(x_0)}\frac{|\phi(y)|}{\meas(B_{R}(y))} d\meas(y)\\
				&\leqslant \frac{C_7}{\meas(B_{R/2}(x_0))}\|\phi\|_{L^1(X)},
			\end{split}
		\end{equation}
		where in the second inequality we have used (\ref{equ2.1}), and in the last inequality  we have used  $B_R(y)\supset B_{R/2}(x_0)$ for any $y\in B_{R/4}(x_0)$ and that 
		\begin{equation*} 
			\frac{1}{t^{N/2}}\cdot \exp\left(-\frac{R^2}{5t\cdot 12^2}\right)\leqslant C_{N,R},\quad \forall t\in(0,R^2)
		\end{equation*}
		for some constant $C_{N,R}>0.$ Similarly, by (\ref{HeatKEGrad}) and (\ref{HeatKELap}), we get that 
		\begin{equation}\label{equ3.11}
			\begin{split}
				|\nabla P_t\phi(x)|& \leqslant \int_{B_{R/4}(x_0)}\big|\nabla p_t(x,y)\big|\cdot|\phi(y)|d\meas(y)\\
				& \leqslant \int_{B_{R/4}(x_0)}\frac{C_8\cdot \exp\left(-\frac{R^2}{5t\cdot 12^2}\right)}{\sqrt t\cdot\meas(B_{\sqrt t}(y))}|\phi(y)|d\meas(y)\\
				&\leqslant \frac{C_9}{\sqrt t} \Big(\frac{R}{\sqrt t}\Big)^N\cdot \exp\left(-\frac{R^2}{5t\cdot 12^2}\right)\int_{B_{R/4}(x_0)}\frac{|\phi(y)|}{\meas(B_{R}(y))} d\meas(y)\\
				&\leqslant \frac{C_{10}}{\meas(B_{R/2}(x_0))}\|\phi\|_{L^1(X)}
			\end{split}
		\end{equation}
		and
		\begin{equation}\label{equ3.12}
			\begin{split}
				|\Delta P_t\phi(x)|&=\big|\frac{\partial}{\partial t}P_t\phi(y)\big|\leqslant \int_{B_{R/4}(x_0)}\big|\frac{\partial}{\partial t}p_t(x,y)\big|\cdot|\phi(y)|d\meas(y)\\
				& \leqslant \int_{B_{R/4}(x_0)}\frac{C_{11}\cdot \exp\left(-\frac{R^2}{5t\cdot 12^2}\right)}{t\cdot\meas(B_{\sqrt t}(y))}|\phi(y)|d\meas(y)\\
				&\leqslant \frac{C_{12}}{\meas(B_{R/2}(x_0))}\|\phi\|_{L^1(X)},
			\end{split}
		\end{equation}
		for almost every  $x\in \overline{B_{2R}(x_0)\setminus B_{R/3}(x_0)}$.
	Noticing that $\eta, P_t\phi\in {\rm Test}^\infty(X)$,  by using Leibniz rule   (see \cite[Proposition 5.2.3]{GP20}) to $\Delta(\eta\cdot P_t\phi)$, and then substituting  (\ref{ equ3.6}),  (\ref{equ3.10}), (\ref{equ3.11}) and (\ref{equ3.12}), we get that 
		\begin{equation*} 
			|\Delta(\eta\cdot P_t\phi)|(x)\leqslant \frac{C_{13}}{\meas(B_{R/2}(x_0))} \|\phi\|_{L^1(X)} 
		\end{equation*}
		for almost all $x\in \overline{B_{2R}(x_0)\setminus B_{R/3}(x_0)}$. 
		By combining with (\ref{equ3.9}), we conclude 
		\begin{equation*}
			\|\Delta(\eta\cdot P_t\phi)\|_{L^\infty(X)}\leqslant \frac{C_{13}}{\meas(B_{R/2}(x_0))} \|\phi\|_{L^1(X)} ,\quad \forall t\in(0,R^2).
		\end{equation*}
		Substituting this into  (\ref{equ3.8}), we get that  for $\phi\in \mathrm{Test}^\infty_{c,+}\big(B_{R/4}(x_0)\big)$,
				\begin{equation} \label{equ3.13}
			\int_{B_{R/4}(x_0)}\ip{\nabla v_t}{\nabla \phi}d\meas\leqslant \frac{C_{13}\cdot \|v\|_{L^1(X)}}{\meas(B_{R/2}(x_0))} \|\phi\|_{L^1(X)}:=c\cdot \|\phi\|_{L^1(X)}.
		\end{equation}  
		By using Lemma \ref{lem-add1}, we know that   (\ref{equ3.13}) holds too for any
 nonnegative $\phi\in Lip_c(\Omega)$. For any $\phi\in Lip_c(\Omega)$, by applying (\ref{equ3.13}) to $\phi^{\pm}$, the desired (\ref{ equ3.4}) follows.    Now the proof of {\it Claim 2} is finished.

		Now, from the Definition \ref{measured-laplace}, {\it Claim 2} states that $v_t\in D(\mathbf{\Delta})$ on $B_{R/4}(x_0)$  and  
		${\bf \Delta}v_t=g_t\cdot\meas$  for some Borel function $g_t$ with  $\|g\|_{L^\infty(B_{R/4}(x_0))}\leqslant c$. According to Lemma \ref{lem2.4}, we get
		$$\sup_{B_{R/8}(x_0)} |\nabla v_t |\leqslant C_{14}\Big(\|v_t\|_{L^1\big(B_{R/4}(x_0)\big)}+c\Big).$$
		Since   $P_t$ is a contraction in $L^1(X)$, we have
		$$\sup_{B_{R/8}(x_0)} |\nabla v_t |\leqslant C_{14}\Big(\|v_t\|_{L^1(X)}+c\Big)\leqslant C_{14}(\|v\|_{L^1(X)}+c):=c_0.$$
		Notice  the constant $c_0$ is  independent of $t$.  Letting  $t\to 0^+$,   since $v_t\to v$ in $L^1(X)$ (by $v\in L^1(X)$), we get that 
		$$\sup_{B_{R/8}(x_0)} |\nabla v |\leqslant c_0.$$
		The proof of Lemma \ref{lem3.5} is finished.
	\end{proof}

	Now  we  are in the position to show Weyl's lemma in $RCD$-setting. 
	\begin{proof}[Proof of Theorem \ref{weyl-lemma}] 
		Let $B_R(x_0)\subset \subset\Omega$ be a ball. We take a Lipschitz function $\chi:X\to[0,1]$ such that $\chi\equiv1$ on $B_{R/2}(x_0)$ and $\chi \equiv0$ on $X\setminus B_{R}(x_0)$. Let 
		$v:=u\cdot \chi$ on $\Omega$ and $v=0$ on $X\setminus \Omega$. Now for any nonnegative $\phi\in {\rm Test}^\infty_{c}\big(B_{R/2}(x_0)\big)$, we have
		$$\Delta_{\mathscr D}v(\phi) =\int_Xv\Delta\phi d\meas=\int_{B_{R/2}(x_0)} u\Delta\phi d\meas=0, $$
		where we have used $\Delta \phi=0$ a.e. in $X\setminus B_{R/2}(x_0)$, since ${\rm supp}(\phi)\subset B_{R/2}(x_0)$ and  Lemma \ref{lem2.5}. By applying Lemma \ref{lem3.5}, we conclude that $v\in  Lip(B_{R/8}(x_0)).$ It follows that $u\in  Lip(B_{R/8}(x_0)),$ since $u=v$ on $B_{R/2}(x_0)$.  Now the proof of Theorem \ref{weyl-lemma} is finished.
	\end{proof}

	\section{Regularity of very weak solutions for Poisson equations}\label{sect4}
	Let $(X,d,\meas)$ be an $RCD(K,N)$ space for some $K\in \mathbb R$ and $N\in[1,+\infty)$.
	
	\begin{definition} \label{poisson}
		Let   $\Omega\subset X$ be an open domain. A function  $u\in L_{\rm loc}^1(\Omega)$ is called a {\it very weakly subsolution (resp. supersolution)} of Poisson equation $\Delta_{\mathscr D,\Omega}u =f$ on $\Omega$ for some $f\in L^1_{\rm loc}(\Omega)$ if 
	\begin{equation}\label{equ4.1}
	\int_{\Omega}u\Delta \phi d\meas\geqslant \ ({\rm resp}. \leqslant)\ \int_\Omega f \phi d\meas, \qquad \forall  \phi\in {\rm Test}^\infty_{c}(\Omega)\ {\rm and}\ \phi\geqslant 0.
	\end{equation}
A function  $u\in L_{\rm loc}^1(\Omega)$ is called a {\it very weakly solution} of $\Delta_{\mathscr D,\Omega}u = f$  on $\Omega$ if it is both very weakly subsolution and supersolution. 
\end{definition}

	Next we will show Corollary \ref{cor1.5}.
	\begin{proof}[Proof of Corollary \ref{cor1.5}]
		Take any  ball $B:=B_R(x)\subset\subset \Omega$. Since $f\in L^2_{\rm loc}(\Omega)$, we have $f\in L^2(B)$. Let $w$ be the solution of the  
		relaxed Dirichlet problem  (see \cite[Theorem 7.12]{Che99} for the solvability):  
		\begin{equation*}
			\begin{cases}
				{\bf \Delta}w=f\cdot\meas\quad  {\rm  on} \ \  B,\\
				w\in W^{1,2}_0(B).
			\end{cases}
		\end{equation*}
	By Lemma \ref{lem3.3}, we get that $\Delta_{\mathscr D,B}w={\bf \Delta}w$, i.e.,
	\begin{equation*}
	\int_Bw\Delta \phi d\meas={\bf \Delta }w(\phi)=\int_Bf\phi d\meas,\quad \ \forall \phi\in {\rm Test}^\infty_{c}(B) \ \ {\rm and}\ \ \phi\geqslant 0.
	\end{equation*}
	By combining with (\ref{equ1.5})  and Lemma \ref{lem2.5}, we have
	\begin{equation*}
	\int_B(u-w)\Delta \phi d\meas=0,\quad \ \forall \phi\in {\rm Test}^\infty_c(B) \ \ {\rm and}\ \ \phi\geqslant 0.
	\end{equation*}	
Therefore the function $u-w\in L^1(B)$ is a very weak harmonic function on $B$. From Theorem \ref{weyl-lemma}, we know that $u-w\in Lip_{\rm loc}(B)$.  It follows $u=(u-w)+w\in W^{1,2}_{\rm loc}(B).$  The proof is finished.
	\end{proof}

	\section{The   Liouville property for $L^1$ (sub)harmonic functions}\label{sect5}
	In this section,  we will prove the Liouville-type theorem for $L^1$ subharmonic/harmonic functions on $RCD(K,N)$ space, which is an extension of  Peter Li's result in  \cite{PeterLi84}. Let $(X,d,\meas)$ be an $RCD(K,N)$ space with $K\in\mathbb R$ and $N\in[1,+\infty)$.	
	
	\begin{lemma}\label{lem5.1}
	Let $u$ and $f$ be in $L^1(X)$. Suppose that $u$ is a very weakly subsolution of Poisson equation $\Delta_{\mathscr D}u=f$ on $X$. Then
		\begin{equation}\label{equ5.1}
			\int_{X} u\Delta h d\meas\geqslant  \int_Xfh d\meas,\quad\ \forall h\in \mathrm{Test}^{\infty}(X)\ {\rm and}\ h\geqslant 0.
					\end{equation}
			\end{lemma}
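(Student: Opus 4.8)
The plan is a straightforward exhaustion argument: approximate an arbitrary nonnegative $h\in\mathrm{Test}^\infty(X)$ by the compactly supported test functions $\eta_R h$, where $\eta_R$ is a good cut-off from Lemma \ref{cutoff}, apply the hypothesis to each $\eta_R h$, and pass to the limit $R\to\infty$, using $u,f\in L^1(X)$ to kill the cut-off error terms. Note first that both sides of \eqref{equ5.1} are finite, since $u,f\in L^1(X)$ while $h,\Delta h\in L^\infty(X)$.

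First I would fix a base point $x_0\in X$ and, for each $R\geqslant 1$, invoke Lemma \ref{cutoff} to obtain $\eta_R\in\mathrm{Test}^\infty(X)$ with $0\leqslant\eta_R\leqslant 1$, $\eta_R\equiv 1$ on $B_R(x_0)$, $\supp(\eta_R)\subset B_{2R}(x_0)$, and $|\nabla\eta_R|+|\Delta\eta_R|\leqslant C/R$ for a constant $C=C_{N,K,1}$ \emph{independent of $R$}. Since $\mathrm{Test}^\infty(X)$ is an algebra, $\phi_R:=\eta_R h\in\mathrm{Test}^\infty(X)$; it is nonnegative and $\supp(\phi_R)\subset B_{2R}(x_0)$ is compact, so $\phi_R\in\mathrm{Test}_{c,+}^{\infty}(X)$. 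Thus the hypothesis that $u$ is a very weakly subsolution of $\Delta_{\mathscr D}u=f$ gives
\begin{equation*}
\int_X u\,\Delta(\eta_R h)\,d\meas\geqslant\int_X f\,\eta_R h\,d\meas,\qquad\forall R\geqslant 1.
\end{equation*}

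Next I would expand the left-hand side by the Leibniz rule for the Laplacian on test functions (see \cite[Proposition 5.2.3]{GP20}),
\begin{equation*}
\Delta(\eta_R h)=\eta_R\Delta h+h\,\Delta\eta_R+2\ip{\nabla\eta_R}{\nabla h},
\end{equation*}
and estimate the three resulting integrals separately. The term $\int_X u\,\eta_R\Delta h\,d\meas$ converges to $\int_X u\,\Delta h\,d\meas$ by dominated convergence, the dominating function being $|u|\,\|\Delta h\|_{L^\infty(X)}\in L^1(X)$. The other two terms are supported in the annulus $B_{2R}(x_0)\setminus B_R(x_0)$, where $\Delta\eta_R$ and $\nabla\eta_R$ can be nonzero, so each is bounded in absolute value by
\begin{equation*}
\frac{C}{R}\big(\|h\|_{L^\infty(X)}+\|\nabla h\|_{L^\infty(X)}\big)\,\|u\|_{L^1(X)}\xrightarrow[R\to\infty]{}0.
\end{equation*}
On the right-hand side, $f\eta_R h\to fh$ pointwise with $|f\eta_R h|\leqslant |f|\,\|h\|_{L^\infty(X)}\in L^1(X)$, so $\int_X f\eta_R h\,d\meas\to\int_X fh\,d\meas$ by dominated convergence as well. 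Letting $R\to\infty$ in the displayed inequality therefore yields \eqref{equ5.1}.

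There is no serious obstacle here beyond the bookkeeping; the only point needing care is that the two cut-off error terms are genuinely negligible, which is precisely what the uniform bound $|\nabla\eta_R|+|\Delta\eta_R|\leqslant C/R$ of Lemma \ref{cutoff} (with $C$ not depending on $R$) is designed to deliver, combined with $u\in L^1(X)$. If one wishes, the bound $\|u\|_{L^1(X)}$ above can be replaced by $\|u\|_{L^1(B_{2R}(x_0)\setminus B_R(x_0))}\to 0$, but this refinement is not needed.
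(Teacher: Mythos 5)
Your proposal is correct and follows essentially the same route as the paper: cut off $h$ by $\eta_R h$ with the good cut-offs of Lemma \ref{cutoff}, apply the very-weak-subsolution inequality to $\eta_R h$, expand $\Delta(\eta_R h)$ by the Leibniz rule, and let $R\to\infty$. The only cosmetic difference is that you kill the error terms via the $C/R$ decay of $|\nabla\eta_R|+|\Delta\eta_R|$ against $\|u\|_{L^1(X)}$, while the paper uses the uniform bound on these quantities against the tail $\|u\|_{L^1(X\setminus B_R(x_0))}\to 0$ — a variant you yourself note at the end.
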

	\begin{proof}
	If $X$ is compact, then we have $\mathrm{Test}^{\infty}_c(X)=\mathrm{Test}^{\infty}(X)$. This assertion follows from the Definition \ref{poisson} with $\Omega=X$.  
	
	Suppose that $X$ is noncompact. Take any  $h\in \mathrm{Test}^{\infty}(X)$ and $h\geqslant 0$.
		For each $R> 1$, by using Lemma \ref{cutoff} (with $r_0=1$ therein),  we have a cut-off function $\eta: X\to [0,1]$   in ${\rm Test}^\infty(X)$  such that $\eta\equiv1$ in $B_{R}(x_0)$,  $\eta\equiv0$ outside $B_{2R}(x_0)$ and 	
\begin{equation}\label{equ5.3}
|\nabla \eta|+|\Delta \eta|\leqslant \frac{C_{K,N}}{R}\leqslant C_{K,N}\quad \meas{-a.e.\ in }\ X,
\end{equation}
where  the constant $C_{K,N}$ is independent of $R$. 
		We set  $\phi:=h\eta$, and then $\phi\in{\rm Test}^{\infty}_c(X)$. Since $\phi=h$ on $B_{R}(x_0)$, from Lemma \ref{lem2.5}, we have
	\begin{equation}\label{equ5.4}
	\Delta \phi(x)=\Delta h(x) \quad {\rm  for}\  \meas{\rm-a.e. }\  x\in B_{R}(x_0).
	\end{equation}
	Since $h,\eta\in {\rm Test}^\infty(X)$, by using  Leibniz rule   (see \cite[Proposition 5.2.3]{GP20}) to $\phi=h\eta$, we get that for $\meas$-a.e. $x\in X\setminus B_{R}(x_0)$,
		\begin{equation*}  
		\begin{split}
				|\Delta \phi|(x)&\leqslant h(x)|\Delta\eta|(x) +2|\ip{\nabla h}{\nabla \eta}|(x)+ \eta(x)|\Delta h|(x) \\
				& \leqslant 2C_{K,N} \big(\|h\|_{L^\infty(X)}+\||\nabla h|\|_{L^\infty(X)}\big)+\|\Delta h\|_{L^\infty(X)}:=c_{K,N,h},
		\end{split}
		\end{equation*}
where we have used 	 (\ref{equ5.3}). 
By combining  this with (\ref{equ5.4}), we get   
		\begin{equation*}
		\begin{split}
			\int_{X}u\Delta hd\meas - \int_{X}u\Delta \phi d\meas&= \int_{X\setminus B_R(x_0)}u(\Delta h-\Delta \phi)d\meas\\
			 &\geqslant -c'_{K,N,h} \cdot   \|u\|_{L^1(X\setminus B_R(x_0)},
			 		\end{split}
		\end{equation*}
		where $c'_{K,N,h}:=c_{K,N,h}+\|\Delta h\|_{L^\infty(X)}$. Since $\phi=h$ on $B_{R}(x_0)$, we have
$$\int_Xf\phi  d\meas-\int_Xfh d\meas\geqslant -\|h\|_{L^\infty(X)}\cdot\|f\|_{L^1(X\setminus B_R(x_0)}.$$
From the combination of these two inequalities and  (\ref{equ4.1}) with $\Omega=X$ (noticing that $\phi\in {\rm Test}^\infty_c(X)$), we get
$$\int_Xu\Delta hd\meas\geqslant \int_Xfh d\meas- c'_{K,N,h} \cdot   \|u\|_{L^1(X\setminus B_R(x_0))}-\|h\|_{L^\infty(X)}\cdot\|f\|_{L^1(X\setminus B_R(x_0))}.
$$	
Letting $R\to +\infty$, it follows that  (\ref{equ5.1}), since $u\in L^1(X)$ and $h\in L^1(X)$.
	\end{proof}
To show the $L^1$-Liouville property, we need  the following proposition.
\begin{proposition}\label{prop5.2}
Let   $u\in L^1(X)$ be a  very weakly subharmonic (or superharmonic) function on $X$. Then it is a very weak harmonic function on $X$. Furthermore,  $u$ is in $ Lip_{\rm loc}(X)$ and is a harmonic function on $X$, namely,
  $u\in D({\bf \Delta})$ and ${\bf \Delta}u=0$ on $X$.				 
 \end{proposition}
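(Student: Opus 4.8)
The plan is to first upgrade very weak subharmonicity to very weak harmonicity, and then feed this into Weyl's lemma. Replacing $u$ by $-u$ if necessary, I may assume $u$ is very weakly subharmonic; since each conclusion proved for $-u$ transfers verbatim to $u$, this reduction is harmless. Then $u\in L^1(X)$ is a very weakly subsolution of $\Delta_{\mathscr D}u=f$ with the admissible datum $f\equiv0\in L^1(X)$, so Lemma \ref{lem5.1} already yields the enlarged family of inequalities
\begin{equation*}
\int_X u\,\Delta h\, d\meas\ \ge\ 0,\qquad \forall\,h\in\mathrm{Test}^\infty(X),\ \ h\ge0.\tag{$\star$}
\end{equation*}

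The core of the argument is to promote $(\star)$ to the identity $\int_X u\,\Delta\phi\,d\meas=0$ for every $\phi\in\mathrm{Test}^\infty_c(X)$; this is the only step that uses the global integrability $u\in L^1(X)$. Fix such a $\phi$, set $C:=\|\phi\|_{L^\infty(X)}$ and a basepoint $x_0\in X$, and for every $R\ge1$ use Lemma \ref{cutoff} (with $r_0=1$) to produce $\eta_R\in\mathrm{Test}^\infty(X)$ with $0\le\eta_R\le1$, $\eta_R\equiv1$ on $B_R(x_0)$ and $\|\Delta\eta_R\|_{L^\infty(X)}\le C_{K,N}/R$. For $R$ large enough that ${\rm supp}(\phi)\subset B_R(x_0)$, the two functions $\pm\phi+C\eta_R$ lie in $\mathrm{Test}^\infty(X)$ and are nonnegative on $X$ (on ${\rm supp}(\phi)$ they equal $\pm\phi+C\ge0$, and off ${\rm supp}(\phi)$ they equal $C\eta_R\ge0$). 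Inserting both into $(\star)$ gives
\begin{equation*}
\Bigl|\int_X u\,\Delta\phi\,d\meas\Bigr|\ \le\ C\Bigl|\int_X u\,\Delta\eta_R\,d\meas\Bigr|\ \le\ \frac{C\,C_{K,N}}{R}\,\|u\|_{L^1(X)},
\end{equation*}
and letting $R\to\infty$ (with $\phi$, hence $C$, fixed) forces $\int_X u\,\Delta\phi\,d\meas=0$. Thus $\Delta_{\mathscr D}u=0$, i.e.\ $u$ is a very weak harmonic function on $X$.

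The remaining assertions are then automatic. By Weyl's lemma (Theorem \ref{weyl-lemma}), $u$ admits a representative in $Lip_{\rm loc}(X)$, and in particular $u\in W^{1,2}_{\rm loc}(X)$. By Lemma \ref{lem3.3}, for $\phi\in\mathrm{Test}^\infty_c(X)$ one has $\int_X u\,\Delta\phi\,d\meas=-\int_X\ip{\nabla u}{\nabla\phi}\,d\meas$, so $\int_X\ip{\nabla u}{\nabla\phi}\,d\meas=0$ for all $\phi\in\mathrm{Test}^\infty_c(X)$; since $\mathrm{Test}^\infty_c(X)$ is dense in $W^{1,2}_0(X)\supset Lip_c(X)$ by Lemma \ref{lem-add1} and $u\in W^{1,2}_{\rm loc}(X)$, this extends to all $\phi\in Lip_c(X)$, which is exactly the statement that $u\in D({\bf\Delta})$ with ${\bf\Delta}u=0$ on $X$. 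The superharmonic case follows by applying everything to $-u$.

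I expect the one genuinely delicate point to be the limit $(\star)\Rightarrow$ harmonicity: one must be sure that the boundary-type term $\int_X u\,\Delta\eta_R\,d\meas$ really tends to $0$ as $R\to\infty$, which is precisely what the $O(1/R)$ bound on $\Delta\eta_R$ from Lemma \ref{cutoff}, together with $u\in L^1(X)$, delivers. The other steps — checking that $\pm\phi+C\eta_R$ lies in $\mathrm{Test}^\infty(X)$ and is nonnegative, and the passage from the test class $\mathrm{Test}^\infty_c(X)$ to $Lip_c(X)$ — are routine.
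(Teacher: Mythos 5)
Your proposal is correct, and for the main step it takes a genuinely different route from the paper. The paper upgrades very weak subharmonicity to harmonicity via the heat flow: it tests the inequality of Lemma \ref{lem5.1} against the (nonnegative, non--compactly supported) heat kernels $p_t(x,\cdot)\in\mathrm{Test}^\infty(X)$ to conclude that $t\mapsto P_tu(x)$ is nondecreasing, and then uses stochastic completeness, $\int_X P_tu\,d\meas=\int_X u\,d\meas$, to force $P_tu=u$; this gives $u\in D(\Delta^{(1)})$ with $\Delta^{(1)}u=0$, and Lemma \ref{lem3.4} transfers this to the distributional Laplacian. You instead sandwich an arbitrary $\phi\in\mathrm{Test}^\infty_c(X)$ between the nonnegative test functions $\pm\phi+C\eta_R$ and exploit the $O(1/R)$ decay of $\Delta\eta_R$ from Lemma \ref{cutoff} together with $u\in L^1(X)$. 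Your argument is sound, and in fact slightly overbuilt: since $\eta_R$ is compactly supported (the space is proper), $\pm\phi+C\eta_R\in\mathrm{Test}^\infty_{c,+}(X)$, so you can apply Definition \ref{poisson} with $\Omega=X$ directly and skip Lemma \ref{lem5.1} altogether. The trade-off is this: the paper's heat-flow route is the classical Li/Sturm $L^1$-Liouville mechanism and yields the extra information that $u$ is invariant under $P_t$, whereas your cutoff route is shorter, avoids the heat kernel estimates entirely, and isolates the precise geometric input (existence of cutoffs with $\|\Delta\eta_R\|_{L^\infty}\lesssim 1/R$) needed for the conclusion. The endgame --- Weyl's lemma to get $\Lip_{\rm loc}$, then Lemma \ref{lem3.3} and the density statement of Lemma \ref{lem-add1} to pass from $\mathrm{Test}^\infty_c(X)$ to $\Lip_c(X)$ and conclude ${\bf\Delta}u=0$ --- coincides with the paper's.
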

\begin{proof}
	 We first consider that  $u$  is a $L^1$  very weakly subharmonic function on $X$. 
		Since by (\ref{HeatKELap}), $\frac{\partial}{\partial t}p_t(x,\cdot)(y)\in{L^\infty(X)}$ for each $t\in (0,\infty)$, we have for $\meas$-a.e. $x\in X$ that 
		\begin{equation}
			\begin{aligned}
				\frac{\partial}{\partial t}P_t u(x)&=\int_{X} \frac{\partial}{\partial t}p_t(x,y)u(y)d\meas(y)\\
				&=\int_{X} \Delta_y p_{t}(x,y)u(y)d\meas(y). \\
			\end{aligned}
			\nonumber
		\end{equation}
	From Lemma \ref{lem2.1}, we have    $p_t(x,\cdot)\in\mathrm{Test}^{\infty}(X)$. Since   $u$ is a $L^1$   very weakly subharmonic function on $X$, by using Lemma \ref{lem5.1},  we conclude that $P_t u(x)$ is a monotone nondecreasing function in $t$, for almost all $x\in X$.
	  On the other hand,  By the stochastic completeness (\ref{  equ2.6}) and Fubini's theorem, we get
		$$\int_XP_tu(x) d\meas(x) =\int_X\int_Xp_t(x,y)u(y)d\meas(y)d\meas(x)=\int_X u(x)\meas (x).$$ 
Therefore,   we have 
		$$P_tu(x) =u(x) \quad \meas{\rm-a.e.}\ x\in X.$$
		This implies $u\in D(\Delta^{(1)})$ and $\Delta^{(1)}u=0$, since $u\in L^1(X)$. From Lemma \ref{lem3.4}, we know that $\Delta_{\mathscr D}u=0$ on $X$ in the sense of distributions. That is, $u$ is a very weak harmonic function on $X$.
		
	On the other hand, if $u$  be a $L^1$  very weakly superharmonic function on $X$, then $-u$ is a very weakly subharmonic, and hence $-u$ is a very weak harmonic function on $X$.

	Now we want to show the second statement. From the first one,  $u$ is very weak harmonic function on $X$.
	By applying Theorem \ref{weyl-lemma}, we get that $u\in  Lip_{\rm loc}(X)$.  Hence, by Lemma \ref{lem3.3}, we get that $u\in D({\bf\Delta})$ and ${\bf \Delta}u(\phi)=0$ for any $\phi\in {\rm Test}^\infty_c(X)$. Since ${\rm Test}^\infty_c(X)$ is dense in $W^{1,2}_0(X)$ (see Lemma \ref{lem-add1}), we conclude that $u$ is a harmonic function on $X$.
		\end{proof}

Now we are in the position to show Corollary \ref{cor1.6}
	\begin{proof} [Proof of Corollary \ref{cor1.6}]
	
		Let $u$ be a $L^1$  very weakly subharmonic function on $X$. From Proposition \ref{prop5.2}, we know in fact that $u$ is  in $  Lip_{\rm loc}(X)$ and is a harmonic function on $X$.

We consider the function $u_1:=\min\{u,a\}$ for arbitrary $a>0$.  Since $u$ is harmonic, according to \cite[Lemma 2]{Stu94}, we have that $u_1$ is superharmonic. In particular, it is very weakly superharmonic on $X$ (by Lemma \ref{lem3.3}). Noticing that $u_1\in L^1(X)$,  by using Proposition \ref{prop5.2} again,  we get that $u_1$ is again harmonic on $X$. We further put $u_2:=\max\{u_1,-a\}$. Similar, from the harmonicity of $u_1$,  \cite[Lemma 2]{Stu94} implies that $u_2$ is subharmonic on $X$; and then again,  Proposition \ref{prop5.2} yields  that $u_2$ is  also harmonic on $X$. From \cite[Lemma 2]{Stu94}, we know that $|u_2|$ is a nonnegative subharmonic function on $X$. Since $-a\leqslant u_2\leqslant a$, we get that $u_2\in L^\infty(X)\cap L^1(X)\in L^2(X)$.
According to \cite[Theorem 1]{Stu94}, we now conclude that $|u_2|$ is a  constant. It follows from the arbitrariness of $a$ that $u$ is a constant. The proof of Corollary \ref{cor1.6} is finished. 
\end{proof}

\section{A Remark on gradient estimates for solutions  of elliptic equations}\label{sec6}
In this section,  we will show Theorem \ref{theorem-1.8}. 

We first deal with the harmonic functions on $RCD(0,N)$ cones.  We recall
that if $N\geqslant 2$ and if  $(\Sigma, d_\Sigma, \meas_{\Sigma})$ is a metric measure space, then the cone $Cone^N_0(\Sigma):= (C, d_c, \meas_c)$ is
the metric measure space given by the warped product metric $\Sigma \times_{r^{N-1}}\mathbb R$ and $d\meas_c(r,\xi):=r^{N-1}drd\meas_\Sigma(\xi)$.  
It was proved \cite{Ket15a}  that  $Cone^N_0(\Sigma)$ is an $RCD(0,N)$ space if and only    if $(\Sigma,d_\Sigma,\meas_\Sigma)$ is an $RCD(N-2,N-1)$ space.
 
\begin{lemma}\label{lem6.1}
Let $(C,d_c,\meas_c)$ be an $RCD(0,N)$ cone with vertex $o$, over $(\Sigma, d_\Sigma,\meas_\Sigma)$.   Suppose that  $u\in W^{1,2}( B_R(o) )$ is a (weakly) harmonic function, then 
\begin{equation}\label{equation-6.1}
\fint_{B_{r_1}(o)}|\nabla u|^2d\meas_c\leqslant \fint_{B_{r_2}(o)}|\nabla u|^2d\meas_c
\end{equation} 
for all $0<r_1<r_2<R$. 
 \end{lemma}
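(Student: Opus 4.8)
The plan is to reduce \eqref{equation-6.1} to the monotonicity of one explicit function of $r$ and then to read off that monotonicity from a separation of variables on the cone. Since $(C,d_c,\meas_c)=Cone^N_0(\Sigma)$ satisfies $\meas_c(B_r(o))=\frac{\meas_\Sigma(\Sigma)}{N}\,r^N$, the inequality \eqref{equation-6.1} is equivalent to the assertion that
\[
r\ \longmapsto\ r^{-N}\int_{B_r(o)}|\nabla u|^2\,d\meas_c
\]
is nondecreasing on $(0,R)$; this is what I would prove for a $W^{1,2}(B_R(o))$-harmonic function $u$.

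First I would set up the Fourier decomposition in the cross-section. By \cite{Ket15a} the cross-section $(\Sigma,d_\Sigma,\meas_\Sigma)$ is an $RCD(N-2,N-1)$ space (an $RCD(0,1)$ space when $N=2$); in particular it is compact and its Laplacian has discrete spectrum $0=\lambda_0<\lambda_1\leqslant\lambda_2\leqslant\cdots\to+\infty$ with an $L^2(\Sigma,\meas_\Sigma)$-orthonormal basis of eigenfunctions $\{\phi_j\}_{j\geqslant0}$. Writing $L^2(B_R(o))\cong L^2\big((0,R),r^{N-1}dr;L^2(\Sigma,\meas_\Sigma)\big)$ and $c_j(r):=\langle u(r,\cdot),\phi_j\rangle_{L^2(\Sigma)}$, and testing the equation $\int_C\langle\nabla u,\nabla\psi\rangle\,d\meas_c=0$ against $\psi=g(r)\phi_j(\xi)$, one finds that each $c_j$ is a weak solution on $(0,R)$ of the Euler equation $(r^{N-1}c_j')'=\lambda_j r^{N-3}c_j$; such solutions are classical and are linear combinations of $r^{\alpha_j^+}$ and $r^{\alpha_j^-}$ (of $1$ and $\log r$, or $1$ and $r^{2-N}$, in the degenerate cases $\lambda_j=0$), where $\alpha_j^{\pm}=\frac{1}{2}\big(2-N\pm\sqrt{(N-2)^2+4\lambda_j}\big)$. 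Since $u\in W^{1,2}(B_R(o))$ has finite Dirichlet energy up to the vertex, the non-$W^{1,2}$ branch must be absent, leaving
\[
c_j(r)=A_j\,r^{\alpha_j},\qquad \alpha_j:=\alpha_j^+\geqslant0,\qquad \lambda_j=\alpha_j(\alpha_j+N-2),
\]
with $\alpha_0=0$, so that the $j=0$ term is a constant and contributes nothing to $\nabla u$.

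Next I would compute the energy explicitly. On the cone $|\nabla(f(r)\phi_j)|^2=|f'(r)|^2\phi_j^2+r^{-2}f(r)^2|\nabla_\Sigma\phi_j|^2$, distinct modes are $L^2(\Sigma)$-orthogonal, and $\int_\Sigma\phi_j^2\,d\meas_\Sigma=1$, $\int_\Sigma|\nabla_\Sigma\phi_j|^2\,d\meas_\Sigma=\lambda_j$; hence
\[
\int_{B_r(o)}|\nabla u|^2\,d\meas_c=\sum_{j\geqslant1}A_j^2(\alpha_j^2+\lambda_j)\int_0^r s^{2\alpha_j+N-3}\,ds=\sum_{j\geqslant1}A_j^2\,\alpha_j\,r^{\,2\alpha_j+N-2},
\]
where the last step uses the identity $\frac{\alpha_j^2+\lambda_j}{2\alpha_j+N-2}=\alpha_j$ coming from $\lambda_j=\alpha_j(\alpha_j+N-2)$ (convergence of the series for $r<R$ is exactly the statement $u\in W^{1,2}(B_R(o))$, and the series is then dominated term by term on $(0,R)$ once we know $\alpha_j\geqslant1$, see below). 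Consequently
\[
r^{-N}\int_{B_r(o)}|\nabla u|^2\,d\meas_c=\sum_{j\geqslant1}A_j^2\,\alpha_j\,r^{\,2(\alpha_j-1)},
\]
a sum of nonnegative multiples of $r\mapsto r^{2(\alpha_j-1)}$; each of these is nondecreasing as soon as $\alpha_j\geqslant1$, and since $\alpha_j$ increases with $\lambda_j\geqslant\lambda_1$ it suffices to have $\alpha_1\geqslant1$.

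The crux of the argument is therefore the estimate $\alpha_1\geqslant1$, which is equivalent to $\lambda_1(\Sigma)\geqslant N-1$ --- precisely the sharp Lichnerowicz--Obata spectral gap for an $RCD(N-2,N-1)$ space: for $N>2$ it reads $\lambda_1\geqslant\frac{(N-1)(N-2)}{(N-1)-1}=N-1$, and for $N=2$ the cross-section is a circle of length $\leqslant2\pi$, for which $\lambda_1\geqslant1$ is elementary. Granting this, every term in the last display is nondecreasing, hence so is the sum, which proves the desired monotonicity and thus \eqref{equation-6.1}. I expect the main obstacle to be not the monotonicity itself but the rigorous justification of the separation of variables in the non-smooth setting: that the Cheeger energy of $C$ genuinely splits as the warped-product integral used above (the warped-product calculus behind \cite{Ket15a}), that weak solutions of the one-dimensional Euler equation are of the stated form, and that the modal expansion converges in $W^{1,2}_{\rm loc}(B_R(o))$ so that the term-by-term integration of $|\nabla u|^2$ is legitimate; once this bookkeeping is in place the conclusion is immediate.
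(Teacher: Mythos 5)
Your argument is correct and follows essentially the same route as the paper: expand $u$ in eigenmodes as $\sum_j c_j r^{\alpha_j}\phi_j(\xi)$ with $\lambda_j=\alpha_j(\alpha_j+N-2)$, compute $r^{-N}\int_{B_r(o)}|\nabla u|^2d\meas_c=\sum_{j\geqslant 1}c_j^2\alpha_j r^{2\alpha_j-2}$, and invoke the Obata-type spectral gap $\lambda_1(\Sigma)\geqslant N-1$ to get $\alpha_j\geqslant 1$ and hence termwise monotonicity. The only difference is that you derive the expansion and the energy identity by hand, whereas the paper outsources them to \cite[Theorem 3.1]{Huang20} and \cite[Eq.\ (2.5)]{DPNZ22}, and cites \cite{Ket15b,JZ16} for the spectral gap.
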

\begin{proof}
According to \cite[Theorem 3.1]{Huang20}, there exist $\{c_i\}_{i}\subset \mathbb R$  such that
$$u(r,\xi)=\sum_{i=0}^\infty c_i r^{\alpha_i}\phi_i(\xi),$$
where $\phi_i(\xi)$ is an eigenfunction of $-\Delta_\Sigma$ with respect to the eigenvalue $\lambda_i$, and $\alpha_i>0$ are given by $\lambda_i=\alpha_i(N+\alpha_i-2)$. 
From \cite{Ket15b,JZ16}, we have the first eigenvalue $\lambda_1\geqslant N-1$. It follows $\alpha_i\geqslant 1$ for all $i\geqslant 1.$
A direct calculation (see \cite[Eq. (2.5)]{DPNZ22}) implies that 
$$\frac{1}{r^N}\int_{B_r(o)}|\nabla u|^2d\meas_c=\sum_{i=1}^\infty c^2_i \alpha_i \cdot r^{2\alpha_i-2}.$$
 The desired monotonicity (\ref{equation-6.1}) follows from this and the fact $\meas_c(B_r(o))=\frac{r^N}{N}\cdot \meas_{\Sigma}(\Sigma).$
\end{proof}

Let $n\geqslant 3$. Let $u\in W^{1,2}_{\rm loc}(B_1(0))$ be a weak solution of elliptic equations of divergence form ${\rm div}(A(x)Du)=0,$
where the matrix of coefficients $A(x)=( a^{ij})_{i,j=1}^n$ is  symmetric, bounded measurable and satisfies (\ref{equation-1.6}).

\begin{proof}[Proof of Theorem \ref{theorem-1.8}]
According to the assumption of   $A$ in Theorem \ref{theorem-1.8},
   there exists a conical coefficient of nonnegative curvature $\overline{A}=(\overline{a}^{ij})$ such that (\ref{equ-dini}) holds.
 Since $\omega_{A,\overline A}(r)\to 0$ as $r\to0$, there is  $r_1>0$ such that
\begin{equation*} 
\|\overline{a}^{ij}\|_{L^\infty(B_{r_1}(0))}\leqslant 2  \Lambda,\qquad   \frac{ \lambda}{2}|\xi|^2\leqslant \sum_{i,j=1}^n\overline{a}^{ij}(x)\xi_i\xi_j,\quad {\rm a.e.} \ x\in B_{r_1}(0), \quad \forall \xi\in \mathbb R^n.
\end{equation*}
It is easy to check that the the  distance $d_{\overline{g}}$ induced from the Riemannian metric $\overline{g}_{ij}$ in \eqref{equation-1.7} is bi-Lipschitz equivalent to the Euclidean metric on $B_{r_1}(0)$. The bi-Lipschitz constant depends only on $ \lambda$ and $ \Lambda$.

 We denote by $B^{\overline{g}}_r(0)$ the ball with respect the metric $d_{\overline g}$, for any $r>0$.  
 Then there exists a constant $C_1=C_{1}( \lambda,  \Lambda)>1$ such that 
\begin{equation}\label{equation-6.2}
B_{r/C_1}(0)\subset B_r^{\overline{g}}(0)\subset B_{C_1r}(0),\qquad \forall r>0.
\end{equation}
The bi-Lipschitz equivalence between $d_{\overline g}$ and the Euclidean distance implies that 
$$ W^{1,2}(B_{r}^{\overline g}(0), d_{\overline g}, {\rm vol}_{\overline g})=W^{1,2}(B_r(0))$$
 for any $r\in (0,r_1/C_1)$, where the Riemannian measure 
\begin{equation}\label{equation-6.3}
{\rm vol}_{\overline g}=\sqrt{G}\cdot \mathcal L^n,\qquad G:= {\rm det}(\overline g_{ij}),
\end{equation}
 and $\mathcal L^n$ is the Lebesgue measure on $\mathbb R^n$.
From (\ref{equation-1.7}), we have 
\begin{equation}\label{equation-6.4}
G=\left({\rm det}{(\overline  a^{ij})}\right)^{\frac{2}{n-2}},\qquad \overline{a}^{ij}=  \overline{g}^{ij}\cdot\sqrt G
\end{equation} for all $1\leqslant i,j\leqslant n.$

Denote $B^{\overline g}_\ell:=B^{\overline g}_{\rho^\ell}(0),$ where $\rho:=1/C_1<1$. Let $\ell_0\in\mathbb N$, $\ell_0>2$, such that $\rho^{\ell_0}<r_1/C_1$. For each $\ell=\ell_0,\ell_0+1, \cdots, $ let $u_\ell$ be the solution 
of Dirichlet problem
$$\partial_i\left(\overline a^{ij}\partial u_\ell\right)=0 \quad{\rm with}\quad u-u_\ell\in W^{1,2}_0(B^{\overline g}_\ell)\ \  \left(=W^{1,2}_0(B^{\overline g}_\ell, d_{\overline g}, {\rm vol}_{\overline g})\right).$$
Noticing that   $\partial_i(a^{ij}\partial_ju)=0$ in the weak sense and  letting $v_\ell=u-u_\ell$, we have 
\begin{equation*}
\int_{B^{\overline g}_\ell}\overline a^{ij}\partial_j v_\ell\partial_iv_\ell d\mathcal L^n=\int_{B^{\overline g}_\ell}\overline a^{ij}\partial_j u\partial_iv_\ell d\mathcal L^n=\int_{B^{\overline g}_\ell}(\overline a^{ij}-a^{ij})\partial_j u\partial_iv_\ell d\mathcal L^n.
\end{equation*}
Together this,  (\ref{equation-6.2}), (\ref{equation-6.3}), (\ref{equation-6.4}) and (\ref{equ-dini}), we get
\begin{equation*}
\begin{split}
\|\nabla_{\overline g}v_\ell\|^2_{L^2(B_\ell^{\overline g},{\rm vol}_{\overline g})} &=\int_{B^{\overline g}_\ell}\overline g^{ij}\partial_j v_\ell\partial_iv_\ell   d{\rm vol}_{\overline g}=\int_{B^{\overline g}_\ell}\overline a^{ij}\partial_j v_\ell\partial_iv_\ell d\mathcal L^n\\
&\leqslant C_2\cdot\omega_{A,\overline A}(C_1\rho^\ell)\cdot\|\nabla_{\overline g}u\|_{L^2(B_\ell^{\overline g},{\rm vol}_{\overline g})} \cdot  \|\nabla_{\overline g}v_\ell\|_{L^2(B_\ell^{\overline g},{\rm vol}_{\overline g})}, 
\end{split}
\end{equation*}
where the constant $C_2$ depends only on $\lambda,\Lambda$, and we have used that $\|\partial_ju\|_{L^2(B_\ell^{\overline g},\mathcal L^n)}\leqslant C_{\lambda,\Lambda} \|\nabla_{\overline g}u\|_{L^2(B_\ell^{\overline g},{\rm vol}_{\overline g})}$ for some constant $C_{\lambda,\Lambda}>0$. Hence,   for each $\ell=\ell_0,\ell_0+1,\cdots,$ we have (noticing that $C_1=\rho^{-1}$)
\begin{equation}\label{equation-6.5}
\|\nabla_{\overline g}v_\ell\|_{L^2(B_\ell^{\overline g},{\rm vol}_{\overline g})} \leqslant C_2\cdot\omega_{A,\overline A}(\rho^{\ell-1})\cdot\|\nabla_{\overline g}u\|_{L^2(B_\ell^{\overline g},{\rm vol}_{\overline g})}.
\end{equation}

On the other hand, we claim that, for each $\ell=\ell_0,\ell_0+1,\cdots,$   $u_\ell$ is a (weakly) harmonic function on $B^{\overline g}_\ell$ in metric measure space $(\mathbb R^n, d_{\overline g}, {\rm vol}_{\overline g})$. Indeed, for any $\phi\in Lip_0(B^{\overline g}_\ell)$, we have
$$\int_{B^{\overline g}_\ell}\ip{ \nabla_{\overline g}u_\ell}{\nabla_{\overline g}\phi}d{\rm vol}_{\overline g} =\int_{B^{\overline g}_\ell}\overline a^{ij}\partial_j u_\ell\partial_i\phi d\mathcal L^n=0.$$

Since  $(\mathbb R^n, d_{\overline g})$ has nonnegative curvature in the sense of Alexandrov, the metric measure space $(\mathbb R^n,  d_{\overline g}, {\rm vol}_{\overline g})$ is an $RCD(0,n)$ cone (see \cite{Pet11,ZZ10}). From Lemma \ref{lem6.1},  we have
$$ \fint_{B_{\ell+1}^{\overline g}(0)}|\nabla_{\overline g}u_\ell|^2d{\rm vol}_{\overline g} \leqslant  \fint_{B_{\ell}^{\overline g}(0)}|\nabla_{\overline g}u_\ell|^2d{\rm vol}_{\overline g}$$
for each $\ell=\ell_0,\ell_0+1,\cdots.$ By combining with (\ref{equation-6.5}) and $u=u_\ell+v_\ell$ on $B^{\overline g}_\ell$, we get
{\small \begin{equation*}
\begin{split}
& \left(\fint_{B_{\ell+1}^{\overline g}(0)}|\nabla_{\overline g}u|^2d{\rm vol}_{\overline g} \right)^{1/2} \\
\leqslant &  \left(\fint_{B_{\ell+1}^{\overline g}(0)}|\nabla_{\overline g}u_\ell|^2d{\rm vol}_{\overline g} \right)^{1/2}+ \left(\fint_{B_{\ell+1}^{\overline g}(0)}|\nabla_{\overline g}v_\ell|^2d{\rm vol}_{\overline g} \right)^{1/2}  \\
 \leqslant& \left(\fint_{B_{\ell}^{\overline g}(0)}|\nabla_{\overline g}u_\ell|^2d{\rm vol}_{\overline g} \right)^{1/2}+ \frac{{\rm vol}_{\overline g}(B^{\overline g}_\ell)}{{\rm vol}_{\overline g}(B^{\overline g}_{\ell+1})}\cdot \left(\fint_{B_{\ell}^{\overline g}(0)}|\nabla_{\overline g}v_\ell|^2d{\rm vol}_{\overline g} \right)^{1/2}  \\
 \leqslant &\left(\fint_{B_{\ell}^{\overline g}(0)}|\nabla_{\overline g}u|^2d{\rm vol}_{\overline g} \right)^{1/2}+ \frac{  C_2}{\rho^n}\cdot \omega_{A,\overline A}(\rho^{\ell-1}) \left(\fint_{B_{\ell}^{\overline g}(0)}|\nabla_{\overline g}u|^2d{\rm vol}_{\overline g} \right)^{1/2}  \\
 \leqslant & \left(1+C_3\omega_{A,\overline A}(\rho^{\ell-1})\right)  \left(\fint_{B_{\ell}^{\overline g}(0)}|\nabla_{\overline g}u|^2d{\rm vol}_{\overline g} \right)^{1/2},   \end{split}
 \end{equation*}}
for any $\ell=\ell_0,\ell_0+1,\cdots,$ where we have used the $\|\nabla_{\overline g}u_\ell\|_{L^2(B_\ell^{\overline g},{\rm vol}_{\overline g})} \leqslant \|\nabla_{\overline g}u\|_{L^2(B_\ell^{\overline g},{\rm vol}_{\overline g})} $ by the property that $u_\ell$ is the  energy minimizing function, and the doubling property of ${\rm vol}_{\overline g}$ (In fact $\frac{{\rm vol}_{\overline g}(B^{\overline g}_\ell)}{{\rm vol}_{\overline g}(B^{\overline g}_{\ell+1})}=\rho^{-n}$).
  By the elementary inequality $\ln(1+s)\leqslant s$ for $s>0$, we have
{\small \begin{equation*}
 \ln \left(\fint_{B_{\ell+1}^{\overline g}(0)}|\nabla_{\overline g}u|^2d{\rm vol}_{\overline g} \right)  \leqslant  \ln \left(\fint_{B_{\ell}^{\overline g}(0)}|\nabla_{\overline g}u|^2d{\rm vol}_{\overline g} \right)+2C_3\omega_{A,\overline A}(\rho^{\ell-1}) 
  \end{equation*}}
for any $\ell=\ell_0.\ell_0+1,\cdots.$ Therefore, we have
{\small \begin{equation*}
\limsup_{\ell\to+\infty} \ln \left(\fint_{B_{\ell+1}^{\overline g}(0)}|\nabla_{\overline g}u|^2d{\rm vol}_{\overline g} \right)  \leqslant  \ln \left(\fint_{B_{\ell_0}^{\overline g}(0)}|\nabla_{\overline g}u|^2d{\rm vol}_{\overline g} \right)+2C_3\sum_{\ell=\ell_0}^\infty\omega_{A,\overline A}(\rho^{\ell-1}).
  \end{equation*}}
Since $\omega_{A,\overline A}(t)$ is nondecreasing in $t>0$, we have
$$\sum_{\ell=\ell_0-1}^\infty\omega_{A,\overline A}(\rho^\ell)\leqslant \sum_{\ell=\ell_0-1}^\infty\int_{\rho^\ell}^{\rho^{\ell-1}}\frac{\omega_{A,\overline A}(t)}{\rho^{\ell-1}-\rho^\ell}\cdot \frac{\rho^{\ell-1}}{t}dt= \frac{1}{1-\rho}\int_{0}^{\rho^{\ell_0-2}}\frac{\omega_{A,\overline A}(t)}{t}dt.$$
Therefore, we conclude that 
{\small \begin{equation*}
\limsup_{\ell\to+\infty}  \left(\fint_{B_{\ell+1}^{\overline g}(0)}|\nabla_{\overline g}u|^2d{\rm vol}_{\overline g} \right)  \leqslant   \left(\fint_{B_{\ell_0}^{\overline g}(0)}|\nabla_{\overline g}u|^2d{\rm vol}_{\overline g} \right)\cdot \exp\left(\frac{2C_3}{1-\rho}\int_0^1  \frac{\omega_{A,\overline A}(t)}{t}dt\right).
  \end{equation*}
 }It follows the desired estimate (\ref{equation-1.9}), by  combining the doubling property of ${\rm vol}_{\overline g}$ and (\ref{equation-6.2})-(\ref{equation-6.4}). The proof is finished. 
  \end{proof}

\end{document}